\theoremstyle{plain}
\newtheorem{theorem}[subsection]{Theorem}
\newtheorem{lemma}[subsection]{Lemma}
\newtheorem{proposition}[subsection]{Proposition}
\newtheorem{corollary}[subsection]{Corollary}
\theoremstyle{definition}
\newtheorem{definition}[subsection]{Definition}
\theoremstyle{remark}
\newtheorem{example}[subsection]{Example}
\newtheorem{remark}[subsection]{Remark}
\newcommand{\K}{\mathbb{K}}
\newcommand{\Hom}{{\sf Hom}}
\newcommand{\set}{\ensuremath{\mathsf{Set}}}
\newcommand{\homlie}{\ensuremath{\mathsf{Hom\text{-}Lie}}}
\newcommand{\homset}{\ensuremath{\mathsf{Hom\text{-}Set}}}
\newcommand{\hommagma}{\ensuremath{\mathsf{Hom\text{-}Mag}}}
\newcommand{\multhomlie}{\ensuremath{\mathsf{Hom\text{-}Lie_{\rm mult}}}}
\newcommand{\Xhomlie}{\ensuremath{\mathsf{XHom\text{-}Lie}}}
\newcommand{\cathomlie}{\ensuremath{\mathsf{Cat}^1\mathsf{\text{-}Hom\text{-}Lie}}}
\DeclareMathOperator{\Id}{Id}
\DeclareMathOperator{\Der}{Der}
\DeclareMathOperator{\IDer}{IDer}
\DeclareMathOperator{\Ext}{Ext}
\DeclareMathOperator{\ab}{ab}
\DeclareMathOperator{\Ima}{Im}
\DeclareMathOperator{\Ker}{Ker}
\def\pullback{
	\ar@{-}[]+R+<6pt,-1pt>;[]+RD+<6pt,-6pt>%
	\ar@{-}[]+D+<1pt,-6pt>;[]+RD+<6pt,-6pt>}
\def\ophalfsplitpullback{
	\ar@{-}[]+R+<6pt,-1pt>;[]+RD+<6pt,-6pt>%
	\ar@{-}[]+D+<.5ex,-6pt>;[]+RD+<6pt,-6pt>}
\def\ophalfsplitpullbacktwo{
	\ar@{-}[]+R+<6pt,-4pt>;[]+RD+<6pt,-6pt>%
	\ar@{-}[]+D+<.5ex,-6pt>;[]+RD+<6pt,-6pt>}
\begin{document}

\title[Abelian extensions and crossed modules of Hom-Lie algebras]{Abelian extensions and crossed modules of Hom-Lie algebras}

\author{Jos\'e-Manuel Casas}
\address[Jos\'e-Manuel Casas]{Departamento de Matem\'atica Aplicada I, Universidade de Vigo, 36005 Pontevedra, Spain}
\email{jmcasas@uvigo.es}

\author{Xabier García-Martínez}
\address[Xabier García-Martínez]{Departamento de Matem\'aticas, Universidade de Santiago de Compostela, 15782 Santiago de Compostela, Spain.}
\email{xabier.garcia@usc.es}

\thanks{This work was partially supported by Ministerio de Economía y Competitividad (Spain), grant MTM2016-79661-P}

\begin{abstract}
	In this paper we study the low dimensional cohomology groups of Hom-Lie algebras and their relation with derivations, abelian extensions and crossed modules. On one hand, we introduce the notion of $\alpha$-abelian extensions and we obtain a five term exact sequence in cohomology. On the other hand, we introduce crossed modules of Hom-Lie algebras showing their equivalence with cat$^1$-Hom-Lie algebras, and we introduce $\alpha$-crossed modules to have a better understanding of the third cohomology group.
\end{abstract}

\subjclass[2010]{18G55, 17A30, 17B55}
\keywords{Hom-Lie algebra, $\alpha$-derivation, $\alpha$-abelian extension, $\alpha$-crossed module, Hom-Lie algebras cohomology}

\maketitle

\section{Introduction}

Hom-Lie algebras were originally introduced in~\cite{HaLaSi} to study the deformations of the Witt and Virasoro algebras, mainly motivated by the study of quantum deformations and discretisation of vector fields via twisted derivations. A Hom-Lie algebra is an anti-commutative algebra satisfying a Jacobi identity twisted by a linear map. The linear map twisting the structure can be seen as a $1$-ary operation so we might think of non-associative structures over the abelian category of vector spaces with a chosen endomorphism (or it can even be done in the broader setting of monoidal categories~\cite{CaGo}). From a categorical algebraic point of view, the interest resides in that when we move to a Hom version of a well-known object, we lose many of their categorical properties, obtaining a richer and more complex structure.

Cohomology of Hom-Lie algebras was introduced in~\cite{Sheng} via a Chevalley-Eilenberg type complex, and the present paper is devoted to study its relations with abelian extensions and crossed modules. In classical categories (groups or Lie algebras), the first cohomology group represents the quotient of derivations by inner derivations, the second one represents abelian extensions~\cite{Wei, HiSt} and the third one, crossed extensions~\cite{Wei, KaLo, Rat}. In this paper we will see that the first and second assertions are not entirely true in Hom-Lie algebras, but may be fixed introducing the notion of $\alpha$-derivation and $\alpha$-abelian extension. On the other hand, the case of the third cohomology group and crossed modules is more complicated. We will introduce the notion of $\alpha$-crossed module to have a partial answer.

This paper is organised as follows. In Section~\ref{S:preliminaries} we recall some preliminaries on Hom-Lie algebras. In Section~\ref{free} we introduce the notion of free Hom-Lie algebra over a Hom-set. In Section~\ref{S:cohomology} we study actions, derivations and we recall the definition of cohomology. Section~\ref{S:extensions} is devoted to study abelian extensions of Hom-Lie algebras and their relations with the second cohomology group, obtaining a five term exact sequence. In Section~\ref{S:crossed} we define crossed modules of Hom-Lie algebras and study their relation with cat$^1$-Hom-Lie algebras and internal categories, illustrating that this definition is coherent with the usual notion of crossed module. Finally, in Section~\ref{S:third_cohomology} we study the third cohomology group of Hom-Lie algebras for which the introduction of $\alpha$-crossed modules is required.

\section{Preliminaries on Hom-Lie algebras}\label{S:preliminaries}

\begin{definition} \cite{HaLaSi} \label{def}
A \emph{Hom-Lie algebra} $(L,[-,-],\alpha_L)$ is a $\mathbb{K}$-vector space $L$ endowed with a bilinear map
$[-,-] \colon L \times L \to L$ and a homomorphism of $\mathbb{K}$-vector spaces $\alpha_L\colon L \to L$
satisfying:
\begin{align}
[x,y] &= - [y,x] \label{Skew} \tag{Skew-symmetry} \\
[\alpha_L(x),[y,z]]&+[\alpha_L(z),[x,y]]+[\alpha_L(y),[z,x]]=0 \label{Jacobi} \tag{Hom-Jacobi identity}
\end{align}
for all $x, y, z \in L$.

A Hom-Lie algebra $(L,[-,-],\alpha_L)$ is said to be
\emph{multiplicative} if the linear map $\alpha$ preserves the bracket~\cite{Yau2}. If we ask $\alpha$ to be a $\K$-algebra automorphism, then it is called \emph{regular}. We will use the shorter notation $(L,\alpha_L)$ whenever there is no confusion with the bracket
\end{definition}

\begin{example}\label{ejemplo 1} \hfil
\begin{enumerate}
\item[(a)] If $\alpha_L = \Id$, we recover the definition of Lie algebra. It is in particular multiplicative and regular.

\item[(b)] If $(A,\mu_A, \alpha_A)$ is a multiplicative Hom-associative algebra (see \cite{MaSi2} for a definition), then
   $HLie(A)$ $=(A,[-,-],\alpha_A)$ is a multiplicative Hom-Lie algebra, where
   $[x,y]=\mu_A(x,y)-\mu_A(y,x)$, for all $x,y \in A$ (see~\cite{Yau2}).

\item[(c)] Let $(L,[-,-])$ be a Lie algebra and $\alpha\colon L \to L$ an endomorphism of Lie algebras.
  If we define $[-,-]_{\alpha} \colon L \otimes L \to L$ by $[x,y]_{\alpha} = \alpha[x,y]$, for all $x, y \in L$,
  then $(L,[-,-]_{\alpha}, \alpha)$ is a multiplicative Hom-Lie algebra \cite[Theorem 5.3]{Yau2}.

\item[(d)] An \emph{abelian} Hom-Lie algebra is a $\K$-vector space $V$ endowed with trivial bracket and any
linear map $\alpha\colon V \to V$ (see~\cite{HaLaSi}).

\item[(e)] The Jackson Hom-Lie algebra $\mathfrak{sl}_2(\mathbb{K})$ is a Hom-Lie deformation of the classical Lie algebra $\mathfrak{sl}_2(\mathbb{K})$ defined by
$[h,f]=-2f, [h,e]=2e,[e,f]=h$. The Jackson $\mathfrak{sl}_2(\mathbb{K})$ is related to derivations. It is generated as a $\mathbb{K}$-vector space by $e, f, h$ with
multiplication given by $[h,j]_t=-2f-2tf, [h,e]_t=2e, [e,f]_t=h+ \frac{t}{2} h$ and the linear map $\alpha_t$ is defined by 
$\alpha_t(e) = \frac{2+t}{2(1+t)}e= e+ \displaystyle \sum_{k=0}^{\infty} \frac{(-1)^k}{2} t^k e, \alpha_t(h)=h, \alpha_t(f)=f+ \frac{t}{2} f$ (see~\cite{MaSi}).

  \item[(f)] For examples coming from deformations we refer to~\cite{Yau2}.

  \item[(g)] Two-dimensional complex multiplicative Hom-Lie algebras are classified in~\cite{CaInPa}.
\end{enumerate}
\end{example}

\begin{definition}\label{homo}
A \emph{homomorphism of Hom-Lie algebras} $f\colon(L,\alpha_L) \to
(L',\alpha_{L'})$ is a $\mathbb{K}$-linear map $f \colon L \to L'$ such that:
\begin{enumerate}
\item[(a)] $f([x,y]) =[f(x),f(y)]'$,
\item [(b)] $f \circ \alpha_L(x) = \alpha_{L'} \circ f(x)$,
\end{enumerate}
for all $x, y \in L$.
A \emph{homomorphism of multiplicative Hom-Lie algebras} is a homomorphism of the underlying Hom-Lie algebras.
\end{definition}

We denote by $\homlie$ (resp.\ $\multhomlie$) the category of Hom-Lie algebras (resp.\ $\multhomlie$).
There is an inclusion functor
 ${inc \colon \multhomlie \to \homlie}$ which has as left adjoint the multiplicative functor ${(-)_{\rm
mult} \colon \homlie \to \multhomlie}$ which assigns to a Hom-Lie algebra $(L,[-,-],\alpha_L)$ the multiplicative Hom-Lie algebra $(L/I,[-,-],\alpha_{\widetilde{L}})$, where $I$ is the ideal of $L$
generated by the elements $\alpha_L[x,y]-[\alpha_L(x),\alpha_L(y)]$, for all
 $x, y \in L$.

Both $\homlie$ and $\multhomlie$ are examples of semi-abelian categories (\cite{JaMaTh}), since they are pointed protomodular varieties of algebras (in particular varieties of $\Omega$-groups). Nevertheless, in the context of semi-abelian categories they do not satisfy some desired properties, such as ``Smith is Huq''~\cite{MaVa} (LAAC)~\cite{Gray2012} or algebraic coherence~\cite{acc}, and they are the main example of semi-abelian categories that do not satisfy the universal central extension condition~\cite{CaVdL}. On the other hand, we pleasantly find that the category of regular Hom-Lie algebras is isomorphic to Lie objects over a certain symmetric monoidal category~\cite{GoVe}.

Therefore, even though the general notion of Hom-Lie algebras is more permissive, for a comprehensive structural study we need the stability properties that the multiplicative condition offers, so in the sequel {\bf we will say Hom-Lie algebra when referring multiplicative Hom-Lie algebra}.

\begin{definition}
Let $(L,\alpha_L)$ be a Hom-Lie algebra. A \emph{Hom-Lie subalgebra} $(H,\alpha_H)$ of $(L,\alpha_L)$
 is a vector subspace $H$ of $L$, which is closed by the bracket and is invariant by $\alpha_L$, that is,
\begin{enumerate}
\item [(a)] $[x,y] \in H,$ for all $x, y \in H$,
\item [(b)] $\alpha_L(x) \in H$, for all $x \in H$.
\end{enumerate}

A Hom-Lie subalgebra $(H,\alpha_H)$ of $(L,\alpha_L)$ is said to be a \emph{Hom-ideal} if $[x,y] \in H$
for all $x \in H, y \in L$.

If $(H,\alpha_H)$ is a Hom-ideal of $(L,\alpha_L)$, then $(L/H,\overline{\alpha_L})$, where $[\overline{x},\overline{y}] = \overline{[x, y]}$, for all $\overline{x}, \overline{y} \in L/H$ and $\overline{\alpha_L} \colon L/H \to L/H$ is naturally induced by $\alpha_L$, inherits
a Hom-Lie algebra structure, which is named \emph{quotient Hom-Lie algebra}. An example of Hom-ideal is the \emph{commutator ideal} $[L, L]$, which is the subalgebra of
$(L, \alpha_L)$ generated by all the elements $[x, y]$, for all $x, y \in L$. Moreover, the quotient $(L/[L, L], \overline{\alpha_L})$ is called the abelianisation of $(L, \alpha_L)$ and it is denoted by $(L^{\ab}, \alpha_{L^{\ab}})$.

\end{definition}

\begin{definition}
We call \emph{centre} of a Hom-Lie algebra $(L,\alpha_L)$ to the Hom-Lie subalgebra
\[
Z(L) = \{ x \in L \mid [\alpha^{n}(x), y] =0 , \text{for all}\ y \in L, n \geq 0 \}
\]
When $\alpha_L \colon L \to L$ is an epimorphism, then $Z(L)$ is a Hom-ideal of $L$. This definition coincides with the categorical notion of centre introduced by Huq~\cite{Huq}.
\end{definition}

\section{Free Hom-Lie algebras} \label{free}

The construction of free Hom-Lie algebras given immediately below is the generalization of the construction of a free Lie algebra over a set given in~\cite{Serre}. An alternative construction can be found in~\cite{Yau}.

\begin{definition} The category whose objects are pairs $(X,\alpha_{X})$, where $X\in \set$ and $\alpha_{X}\colon X\to X$ is a map, and whose morphisms are maps $f\colon(X,\alpha_{X}) \to (Y,\alpha_{Y})$, where $f\colon X\to Y$ is a map such that $f\circ \alpha_{X}=\alpha_{Y}\circ f$, is called the \emph{category of Hom-sets} and it is denoted by $\homset$.
\end{definition}

\begin{definition}
	Given a Hom-set $(M,\alpha_{M})$ we may endow it with a Hom-magma operation, i.e.\ a magma multiplication $w\colon M\times M\to M$ on the set $M$, such that ${w \circ (\alpha_{M}\times\alpha_{M}) =\alpha_{M} \circ w}$. A morphism of Hom-magmas ${f\colon (M,\alpha_{M}) \to (N,\alpha_{N})}$ is just a map that preserves the binary operation, i.e.\ $f (w_{M}(x,y)) = w_{M} (f(x),f(y))$, and $f \circ \alpha_{M}=\alpha_{N} \circ f$. The category of \emph{Hom-magmas} will be  denoted by $\hommagma$.
\end{definition}

To construct the free Hom-Magma over a Hom-set $(X,\alpha_{X})$, we define inductively the family of Hom-sets $(X_{n},\alpha_{n})$, $(n\geqslant1)$, as follows:
\newpage
\begin{enumerate}
\item[(i)] $X_{1}=X$; $\alpha_{1}=\alpha_{X}$.

\item[(ii)] $X_{n}=\underset{p+q=n}{\coprod}X_{p}\times X_{q}$ $(
n\geqslant2) $ (disjoint union); $\alpha_{n}\colon X_{n}\longrightarrow
X_{n}$ induced by $\alpha_{X}$ (thus $\alpha_{2}\colon X_{2}(=X\times X) \longrightarrow
X_{2}(=X\times X)$, where $\alpha_{2}=\alpha_{X}\times
\alpha_{X}$; $\alpha_{3}=\alpha_{X}\times(\alpha_{X}\times\alpha_{X})
+(\alpha_{X}\times\alpha_{X}) \times\alpha_{X}$, etc.).
\end{enumerate}

Take $M_{X}=\underset{n=1}{\overset{\infty}{\coprod}}X_{n}$ and $\alpha
_{M}\colon M_{X}\to M_{X}$ the endomorphism induced by $\alpha_{X}$, i.e.\
$\alpha_{M}=(\alpha_{1}, \alpha_{2}, \alpha_{3}, \dots)$.
We define the binary operation $w\colon M_{X}\times M_{X}\to M_{X}$ by means of $X_{p}\times X_{q}\longrightarrow X_{p+q}\subseteq M_{X}$, where this map is the canonical inclusion induced by (ii).

\begin{definition} 
$(M_{X},\alpha_{M})$ is said to be the \emph{free Hom-Magma} over the Hom-set $(X,\alpha_{X})$.
An element $\gamma$ de $M_{X}$ is said to be a \emph{non-associative word} over $X$. Its length, $l(\gamma)$, is the unique $n$ such that $\gamma\in X_{n}$.
\end{definition}

\begin{theorem}\label{magma libre Lie}
Let $(N,\alpha_{N})$ be a Hom-Magma and $f\colon (X,\alpha_{X}) \to (N,\alpha_{N})$
a morphism of Hom-sets.
Then there exists a unique homomorphism of Hom-magmas $F\colon (M_{X},\alpha_{M}) \to (N,\alpha_{N})$ that
extends $f$.
\end{theorem}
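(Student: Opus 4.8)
The plan is to construct $F$ by induction on the length $l(\gamma)$ of non-associative words, and to observe at each stage that the recursive clause defining $F$ is forced by the two axioms (extend $f$, preserve the multiplication), which yields uniqueness for free.

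First I would set $F = f$ on $X_1 = X$. Assume $F$ has been defined on $X_m$ for all $m < n$, where $n \geq 2$. Since $X_n = \coprod_{p+q=n} X_p\times X_q$ is a disjoint union, every $\gamma \in X_n$ can be written $\gamma = w(\beta,\delta)$ for a unique pair $(p,q)$ with $p+q=n$ and unique $\beta \in X_p$, $\delta \in X_q$; I then put
\[
F(\gamma) = w_N\bigl(F(\beta),F(\delta)\bigr).
\]
Any homomorphism of Hom-magmas extending $f$ must agree with $f$ on $X_1$ and, by the multiplicativity condition, must satisfy exactly this relation on each $X_n$; an induction on length therefore shows $F$ is the only candidate, proving uniqueness simultaneously with the construction.

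Next I would check that $F\colon M_X \to N$ is a homomorphism of Hom-magmas. Preservation of the binary operation is essentially immediate from the definition: if $\beta \in X_p$ and $\delta \in X_q$, then $w(\beta,\delta)$ lies in $X_{p+q}$ via the canonical inclusion, and the defining clause of $F$ on $X_{p+q}$ gives $F(w(\beta,\delta)) = w_N(F(\beta),F(\delta))$. The remaining identity $F\circ \alpha_M = \alpha_N \circ F$ I would again prove by induction on length. For $l(\gamma)=1$ it is precisely the hypothesis that $f$ is a morphism of Hom-sets. For the inductive step, the key point — which I would isolate as a short lemma obtained by unwinding the inductive definition of $\alpha_n$ (written suggestively as $\alpha_X\times(\alpha_X\times\alpha_X) + (\alpha_X\times\alpha_X)\times\alpha_X$ for $n=3$, and analogously for larger $n$) — is that under the identification $X_n = \coprod_{p+q=n} X_p\times X_q$ the map $\alpha_n$ restricts on each summand to $\alpha_p\times\alpha_q$; equivalently $\alpha_M(w(\beta,\delta)) = w(\alpha_M(\beta),\alpha_M(\delta))$. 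Granting this, for $\gamma = w(\beta,\delta)$ with $\beta \in X_p$, $\delta \in X_q$ I compute
\[
F(\alpha_M(\gamma)) = F\bigl(w(\alpha_M(\beta),\alpha_M(\delta))\bigr) = w_N\bigl(F(\alpha_M(\beta)),F(\alpha_M(\delta))\bigr) = w_N\bigl(\alpha_N(F(\beta)),\alpha_N(F(\delta))\bigr),
\]
where the last equality uses the inductive hypothesis; by the compatibility $w_N\circ(\alpha_N\times\alpha_N) = \alpha_N\circ w_N$ of the Hom-magma $N$ this equals $\alpha_N\bigl(w_N(F(\beta),F(\delta))\bigr) = \alpha_N(F(\gamma))$, closing the induction.

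I expect the only genuinely delicate point to be the bookkeeping in that short lemma, namely verifying that the inductively defined $\alpha_n$ acts summand-wise as $\alpha_p\times\alpha_q$ and is therefore literally compatible with the inclusions $X_p\times X_q \hookrightarrow X_{p+q}$ used to define $w$. Everything else is a formal induction on word length, and the uniqueness statement requires no extra argument since the recursive clauses defining $F$ are dictated by the axioms.
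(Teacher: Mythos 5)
Your proposal is correct and follows essentially the same route as the paper: the paper's proof is exactly the recursive definition of $F$ by induction on word length, with $F=f$ on $X_1$ and $F(w_M(\gamma,\delta))=w_N(F(\gamma),F(\delta))$ on longer words. You merely spell out what the paper leaves implicit, namely the uniqueness (the recursion is forced) and the verification that $F\circ\alpha_M=\alpha_N\circ F$ via the summand-wise description of $\alpha_n$, both of which are sound.
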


\begin{proof}
We define by induction $F\colon M_{X}\to N$  as follows: any word $\epsilon \in X_n$ can be decomposed in words of lower length $\gamma \in X_p, \delta \in X_q$, with $p+q=n$; then $F(\epsilon) = F(w_M(\gamma, \delta)) =w_{N}(F(\gamma),F(\delta))$, and when $\epsilon \in X_1$, then $F(\epsilon)=f(\epsilon)$. Here $w_N$ denotes the binary operation in the Hom-Magma $(N,\alpha_N)$.
\end{proof}

Theorem \ref{magma libre Lie} provides the adjoint functors
\[
\xymatrix{ 
\homset \ar@<1ex>[r]^{\mathcal{F}_1} \ar@{}[r]|-{\bot} & \ar@<1ex>[l]^{\mathcal{U}_1} \hommagma
}
\]
$\mathcal{F}_1$ assigns to a Hom-set $(X, \alpha_X)$ the free Hom-magma $\mathcal{F}_1(X, \alpha_X) = (M_X,\alpha_M)$ and $\mathcal{U}_1$ assigns to a Hom-magma $(N,\alpha_N)$ the Hom-set obtained by forgetting the binary operation.

\begin{definition} \cite{Yau} A \emph{non-associative Hom-algebra} is a triple $(A, \mu, \alpha)$ where:
\begin{enumerate}
\item[(a)] $A$ is a $\mathbb{K}$-vector space.

\item[(b)] $\mu\colon A\otimes A\to A$ is a bilinear map.

\item[(c)] $\alpha\in End(A)$ preserves the bilinear map, i.e.\
$\alpha(\mu(x\otimes y))
=\mu(\alpha(x) \otimes\alpha(y))$, for all $x, y \in A$.
\end{enumerate}

A morphism between two non-associative Hom-algebras $f\colon (A,\mu,\alpha) \to (A^{\prime},\mu^{\prime},\alpha^{\prime
})$ is a linear map $f\colon A\to A^{\prime}$ such that $\alpha^{\prime} \circ f=f \circ \alpha$ and $f \circ \mu=\mu^{\prime} \circ f^{\otimes2}$.
\end{definition}

Let $(A_{X},\alpha_{A})$ be such that $A_{X}$ is the $\mathbb{K}$-algebra generated by the free magma $M_{X}$ and $\alpha_{A}$ the endomorphism induced by $\alpha_{X}$. More explicitly, an element $\gamma\in A_X$ is a finite sum $\gamma=\displaystyle \sum_{m\in M_{X}} c_{m} m$, where $c_{m}\in \mathbb{K}$; the multiplication in $A_{X}$ extends the
multiplication (binary operation) in $M_{X}$ and $\alpha_{A}\colon A_{X}\to A_{X}$ is given by ${\alpha_{A}(\gamma)
=\displaystyle \sum_{m\in M_{X}} c_{m} \alpha_{x}(m)}$.

In this way, $(A_{X},\alpha_{A})$ is a non-associative Hom-algebra where $A_{X}$ is a $\mathbb{K}$-vector space spanned by $X$, the binary operation $\mu$ is derived from the operation $\omega$ of the magma and $\alpha_A$ preserves this operation.

Moreover $(A_{X},\alpha_{A})$ is the free Hom-algebra over $(X,\alpha_{X})$, due to the following property.

\begin{theorem} \label{algebra libre Lie} Let $(B,\alpha_{B})$ be a Hom-algebra and
 $f\colon (X,\alpha_{X}) \to (B,\alpha_{B})$ a map of Hom-sets.
There exists a unique homomorphism of Hom-algebras $F\colon (A_{X},\alpha_{A}) \to (B,\alpha_{B})$ that extends $f$.
\end{theorem}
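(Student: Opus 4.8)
The plan is to build $F$ on generators via the free Hom-magma and then extend linearly, using the previously established universal properties to keep the verifications short. First I would observe that the underlying Hom-set of $(B,\alpha_B)$ (forgetting the linear structure and keeping only the magma multiplication $\mu$ and the map $\alpha_B$) is in particular a Hom-magma. Hence the given morphism of Hom-sets $f\colon (X,\alpha_X)\to(B,\alpha_B)$ is a morphism of Hom-sets into this Hom-magma, and by Theorem~\ref{magma libre Lie} it extends uniquely to a homomorphism of Hom-magmas $\widetilde F\colon (M_X,\alpha_M)\to (B,\mu,\alpha_B)$; that is, $\widetilde F$ preserves the magma operation and commutes with $\alpha$.

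Next I would extend $\widetilde F$ linearly: since $A_X$ is the $\mathbb{K}$-vector space with basis $M_X$, there is a unique $\mathbb{K}$-linear map $F\colon A_X\to B$ with $F(m)=\widetilde F(m)$ for every $m\in M_X$, namely $F\bigl(\sum_{m\in M_X}c_m m\bigr)=\sum_{m\in M_X}c_m\,\widetilde F(m)$. It remains to check that $F$ is a homomorphism of Hom-algebras, i.e.\ that $F\circ\mu_{A_X}=\mu_B\circ F^{\otimes 2}$ and $F\circ\alpha_A=\alpha_B\circ F$. Both identities hold on the basis $M_X$ (the first because $\widetilde F$ preserves the magma operation and $\mu_{A_X}$ restricts to that operation on $M_X$; the second because $\widetilde F$ commutes with $\alpha$ and $\alpha_A$ restricts to $\alpha_M$ on $M_X$), and since $\mu_{A_X}$, $\mu_B$, $\alpha_A$, $\alpha_B$, $F$ are all (bi)linear, bilinear extension from a basis gives the identities on all of $A_X$. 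Uniqueness is immediate: any homomorphism of Hom-algebras extending $f$ must agree with $\widetilde F$ on $M_X$ by the uniqueness clause of Theorem~\ref{magma libre Lie} (its restriction to the sub-Hom-magma generated by $X$ is a Hom-magma morphism extending $f$), hence with $F$ on a basis of $A_X$, hence everywhere.

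The only mildly delicate point is bookkeeping rather than mathematics: one must be careful that the magma operation $w$ on $M_X$ genuinely coincides with the restriction of the bilinear multiplication $\mu_{A_X}$ to basis elements, and that $\alpha_A$ restricts to $\alpha_M$ on $M_X$ — but both are true by the very construction of $(A_X,\alpha_A)$ given above. I expect no real obstacle here; the statement is a routine "linearise the free magma" argument, entirely parallel to the classical free Lie algebra construction in~\cite{Serre}, with the endomorphisms $\alpha$ carried along at each stage.
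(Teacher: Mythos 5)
Your argument is correct and is essentially the paper's own proof: apply Theorem~\ref{magma libre Lie} to the underlying Hom-magma of $(B,\alpha_B)$ to extend $f$ over $M_X$, then extend $\mathbb{K}$-linearly to $A_X$ and check the homomorphism conditions on the basis. The only difference is that you spell out the basis-level verifications and the uniqueness step, which the paper leaves implicit in the phrase ``by construction''.
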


\begin{proof}
	By Theorem \ref{magma libre Lie}, $f$ can be extended to a homomorphism of Hom-magmas $f^{\prime}\colon (M_{X},\alpha_{M}) \to (B,\alpha_{B})$, and $f'$ extends by linearity to a $\mathbb{K}$-linear morphism ${F\colon A_X \to B}$. By construction, $F$ satisfies the required properties.
\end{proof}

\begin{remark}
$A_X$ is a graded algebra whose homogeneous elements of degree $n$ are linear combinations of words $m \in M_X$ of length $n$. The endomorphism $\alpha_A$ is induced by $\alpha_X$.
\end{remark}

Let $I$ be the two-sided ideal of $(A_{X},\alpha_{A})$ spanned by the elements of the form:
\begin{enumerate}
\item[(a)] $ab+ba$,
\item[(b)] $\alpha_{A}(a) (
bc) +\alpha_{A}(c) (ab) +\alpha_{A}(
b) (ca)$,
\end{enumerate}
where  $a$,$b$,$c\in A_{X}$.

\begin{definition}\label{free algebra}
The quotient algebra $(A_{X}/I,\overline{\alpha}_{A})$ is said to be the \emph{free Hom-Lie algebra} over
the Hom-set $(X,\alpha_{X})$.
We denote it by $\mathcal{F}_r(X,\alpha_{X})$.
\end{definition}

\begin{theorem} \label{Hom-Lie libre}
Let $(B,\alpha_{B})$ be a Hom-Lie algebra and $f\colon (X,\alpha_{X}) \to (B,\alpha_{B})$
a map of Hom-sets. Then there exists a unique homomorphism of Hom-Lie algebras
$F\colon \mathcal{F}_{r}(X,\alpha_{X}) \to (B,\alpha_{B})$
that extends $f$.
\end{theorem}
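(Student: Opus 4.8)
The plan is to deduce the universal property from that of the free non-associative Hom-algebra (Theorem~\ref{algebra libre Lie}) and then show that the induced morphism descends to the quotient defining $\mathcal{F}_r(X,\alpha_{X})$ in Definition~\ref{free algebra}. First I would regard $(B,\alpha_{B})$ as a non-associative Hom-algebra, its bilinear operation being the bracket $[-,-]$; since $\alpha_{B}$ preserves $[-,-]$ (here we use that $B$ is multiplicative), this is legitimate. Applying Theorem~\ref{algebra libre Lie} to the Hom-set map $f\colon(X,\alpha_{X})\to(B,\alpha_{B})$ produces a unique homomorphism of Hom-algebras $\widetilde{F}\colon(A_{X},\alpha_{A})\to(B,\alpha_{B})$ extending $f$, so that $\widetilde{F}(ab)=[\widetilde{F}(a),\widetilde{F}(b)]$ and $\widetilde{F}(\alpha_{A}(a))=\alpha_{B}(\widetilde{F}(a))$ for all $a,b\in A_{X}$.

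Next I would check that the two-sided ideal $I$ of Definition~\ref{free algebra} is contained in $\Ker\widetilde{F}$. As $\Ker\widetilde{F}$ is itself a two-sided ideal, it suffices to see that $\widetilde{F}$ annihilates the generators of $I$. For a generator of type (a) we get $\widetilde{F}(ab+ba)=[\widetilde{F}(a),\widetilde{F}(b)]+[\widetilde{F}(b),\widetilde{F}(a)]=0$ by \eqref{Skew} in $B$. For a generator of type (b), writing $x=\widetilde{F}(a)$, $y=\widetilde{F}(b)$, $z=\widetilde{F}(c)$ and using multiplicativity of $\widetilde{F}$ together with $\widetilde{F}\circ\alpha_{A}=\alpha_{B}\circ\widetilde{F}$,
\[
\widetilde{F}\bigl(\alpha_{A}(a)(bc)+\alpha_{A}(c)(ab)+\alpha_{A}(b)(ca)\bigr)=[\alpha_{B}(x),[y,z]]+[\alpha_{B}(z),[x,y]]+[\alpha_{B}(y),[z,x]]=0
\]
by \eqref{Jacobi} in $B$. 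Hence $\widetilde{F}$ factors (uniquely as a linear map) through the projection $\pi\colon A_{X}\to A_{X}/I=\mathcal{F}_{r}(X,\alpha_{X})$, giving $F\colon\mathcal{F}_{r}(X,\alpha_{X})\to B$. Since $\pi$ is a surjective homomorphism of Hom-algebras and $\widetilde{F}=F\circ\pi$ is one, $F$ is a homomorphism of Hom-algebras; and because the bracket of $\mathcal{F}_{r}(X,\alpha_{X})$ is exactly the multiplication of $A_{X}$ pushed to the quotient and $\overline{\alpha}_{A}$ is the map induced by $\alpha_{A}$, $F$ satisfies conditions (a) and (b) of Definition~\ref{homo}, i.e.\ it is a homomorphism of Hom-Lie algebras. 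It extends $f$ because $\widetilde{F}$ does.

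For uniqueness, I would observe that the image of $X$ under $X\hookrightarrow A_{X}\xrightarrow{\pi}\mathcal{F}_{r}(X,\alpha_{X})$ generates $\mathcal{F}_{r}(X,\alpha_{X})$ as a Hom-Lie algebra: the free magma $M_{X}$ spans $A_{X}$ over $\mathbb{K}$, and each word of $M_{X}$ becomes, modulo $I$, an iterated bracket of elements of $X$. Thus any homomorphism of Hom-Lie algebras out of $\mathcal{F}_{r}(X,\alpha_{X})$ is determined by its restriction to (the image of) $X$, which forces uniqueness of $F$. I do not anticipate a genuine obstacle: the only delicate point is to make sure the defining relators of $I$ — the sign in $ab+ba$ and the exact placement of $\alpha_{A}$ on the outer factor in the cyclic sum of type (b) — match \eqref{Skew} and \eqref{Jacobi} precisely, which is what the displayed computation above verifies; everything else is a routine transport of structure along the quotient map.
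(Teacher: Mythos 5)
Your proof is correct and follows essentially the same route as the paper: apply Theorem~\ref{algebra libre Lie} to $(B,\alpha_B)$ viewed as a non-associative Hom-algebra via its bracket, check that the induced homomorphism kills the generators of $I$ using \eqref{Skew} and \eqref{Jacobi}, and descend to the quotient $A_X/I=\mathcal{F}_r(X,\alpha_X)$. Your write-up simply makes explicit the vanishing on $I$ and the uniqueness argument (that the image of $X$ generates the quotient), which the paper leaves as brief assertions.
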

\begin{proof} 
We know by Theorem \ref{algebra libre Lie} that there exists a unique homomorphism of algebras $f' \colon (A_X, \alpha_A) \to (B, \alpha_B)$ such that $f'\circ i = f$, where $i \colon (X,\alpha_{X}) \to (A_X, \alpha_A)$ is the canonical inclusion. Since $f^{\prime}$ vanishes over $I$, it induces a homomorphism of Hom-Lie algebras $\varphi \colon (A_X/I, \overline{\alpha_A}) \to (B, \alpha_B)$ that extends $f'$. 
\end{proof}

Theorem \ref{Hom-Lie libre} provides the adjoint functors
\[
\xymatrix{ \homset \ar@<1ex>[r]^{\mathcal{F}_r} \ar@{}[r]|-{\bot} & \ar@<1ex>[l]^{\mathcal{U}} \homlie}
\]
where $\mathcal{F}_r$ sends a Hom-set $(X, \alpha_X)$ to the free Hom-Lie algebra $\mathcal{F}_r(X, \alpha_X) = (A_X/I,\overline{\alpha_A})$ and $\mathcal{U}$ sends a Hom-Lie algebra $(B,\alpha_B)$ to the Hom-set obtained by forgetting the operations.

\begin{remark}
The above construction with $\alpha = \Id$ recovers the construction of the free Lie algebra over a set given in \cite{Serre}.
\end{remark}

\begin{remark}
	In a parallel way, one can also construct the free Hom-Lie algebra over a set, which is the standard notion of free algebra when seeing Hom-Lie algebras as vector spaces with two operations satisfying certain identities.
\end{remark}

\section{A Cohomology theory of Hom-Lie algebras}\label{S:cohomology}

\subsection{Actions}

\begin{definition}
Let $(L, \alpha_L)$ and $(M,\alpha_M)$ be Hom-Lie algebras. A \emph{Hom-action} from $(L,\alpha_L)$ over $(M,\alpha_M)$ consists in a bilinear map $\sigma \colon L \otimes M \to M, \sigma(x\otimes m) =x \centerdot m$ satisfying the following identities:
\begin{enumerate}
\item[(a)] $[x,y] \centerdot \alpha_M(m) = \alpha_L(x) \centerdot (y \centerdot m) - \alpha_L(y) \centerdot (x \centerdot m)$,

\item[(b)] $\alpha_L(x) \centerdot [m, m'] = [x \centerdot m, \alpha_M(m')]+ [\alpha_M(m),x \centerdot m']$,

\item[(c)] $\alpha_M(x \centerdot m) = \alpha_L(x) \centerdot \alpha_M(m)$,
\end{enumerate}
for all $x, y \in L$ and $m, m' \in M$. This definition coincides with the standard notion of internal action~\cite{Beck, BoJaKe2}.
Note that if we were in the non-multiplicative case, the right notion of action would be formed dropping identity (c). 

If there is a Hom-action from $(L,\alpha_L)$ over $(M,\alpha_M)$ and $(M,\alpha_M)$ is an abelian Hom-Lie algebra, then $(M,\alpha_M)$ is said to be a \emph{Hom-$L$-module} (see \cite{Yau2}).
\end{definition}

\newpage
\begin{example}\label{ex action} \hfil
\begin{enumerate}
\item[(a)] Let $M$ be a Lie $L$-algebra with action given by $\sigma \colon L \to {\rm Der}(M)$. Then there is a Hom-action from $(L,\Id_L)$ over $(M,\Id_M)$ given by $\sigma$.

\item[(b)] Let $(H,\alpha_H)$ be a Hom-subalgebra of a Hom-Lie algebra $(L,\alpha_L)$ and $(K,\alpha_K)$ is a Hom-ideal of $(L,\alpha_L)$. Then the bracket in $(L,\alpha_L)$ yields a Hom-action over $(H,\alpha_H)$ on $(K,\alpha_K)$.

\item[(c)] If we consider a  Lie algebra $\mathfrak{g}$ as a
Hom-Lie algebra in the way of  Example~\ref{ejemplo 1}~(a) and 
$M$ is a $\mathfrak{g}$-module in the usual way, 
then $(M, \Id_M)$ is a Hom-$\mathfrak{g}$-module.

\item[(d)] Let $\mathfrak{g}$ be a Lie algebra, $\alpha \colon \mathfrak{g} \to \mathfrak{g}$ an endomorphism and $M$ a $\mathfrak{g}$-module in the usual way, such that the action from $\mathfrak{g}$ over $M$ satisfies the condition $\alpha(x) \centerdot m = x \centerdot m$, for all $x \in \mathfrak{g}$ and $m \in M$. Then $(M, \Id)$ is a Hom-$\mathfrak{g}$-module. An example of this situation is given by the two-dimensional Lie algebra $L$ spanned by $\{ e, f \}$, with bracket $[e,f]=-[f,e]=e$ and endomorphism $\alpha$ represented by the matrix $\left(\begin{array}{cc} 1 & 1 \\ 0 & 1 \end{array}\right)$, where $M$ is the ideal spanned by $\{e\}$.

\item[(e)] Further examples can be found in~\cite[Example 6.2]{Yau2}.
\end{enumerate}
\end{example}

As expected, the notion of action induces the following definition:
\begin{definition}\label{D:semidirect}
	Let $(L,\alpha_L)$ and $(M,\alpha_M)$ be Hom-Lie algebras with an action from $(L,\alpha_L)$ over $(M,\alpha_M)$. The \emph{semi-direct product} $(M \rtimes L, \widetilde{\alpha})$ is the Hom-Lie algebra with underlying
	$\mathbb{K}$-vector space $M \oplus L$, with bracket
	\[
	[(m_1,x_1),(m_2,x_2)]= ([m_1,m_2]+ x_1 \centerdot m_2 - x_2 \centerdot m_1, [x_1,x_2])
	\]
	and endomorphism
	\[
	\widetilde{\alpha} \colon M \oplus L \to M \oplus L, \quad 
	\widetilde{\alpha} (m, x) = (\alpha_M(m), \alpha_L(x))
	\] 
	for all
	$x,x_1,x_2 \in L$ and $m, m_1, m_2 \in M$.
\end{definition}

There is an injective homomorphism $i \colon M \to M \rtimes L$ given by $i(m) =
(m,0)$ and a surjective homomorphism $\pi \colon M \rtimes L \to L$ given by
$\pi(m,l)= l$. Moreover $i(M)$ is a Hom-ideal of $M \rtimes L$
such that $\frac{M \rtimes L}{i(M)} \cong L$. On the other hand, there is a morphism $\sigma \colon L \to M \rtimes L$, $l \mapsto \sigma(l) = (0, l)$, which is clearly a section of $\pi$. Hence we obtain the split exact sequence of Hom-Lie algebras
\begin{equation}\label{extension semidirecto Lie}
\xymatrix{
0 \ar[r] & (M,\alpha_M) \ar[r]^-i & (M \rtimes L,\widetilde{\alpha}) \ar@<-.5ex>[r]_-{\pi} & (L,\alpha_L) \ar@<-.5ex>[l]_-{\sigma} \ar[r] & 0 
}
\end{equation}
and $(M,\alpha_M)$ is a Hom-${M \! \rtimes \! L}$-module via
$\pi$.

\begin{definition}\label{D:derivation}
Let $(L,\alpha_L)$ be a Hom-Lie algebra and $(M,\alpha_M)$
a Hom-$L$-module. A \emph{derivation} from $(L,\alpha_L)$ to $(M,\alpha_M)$ is a
$\mathbb{K}$-linear map $d \colon L \to M$ satisfying
\begin{align*}
d[x,y] &= x \centerdot d(y) - y \centerdot d(x), \\
\alpha_M \circ d &= d \circ \alpha_L,
\end{align*}
for all $x, y \in L$.

We denote by $\Der(L, M)$ the $\mathbb{K}$-vector space of all
derivations from $(L, \alpha_{L})$ to $(M,\alpha_{M})$.
\end{definition}

\begin{example}\hfill
\begin{enumerate}
\item [(a)] If $d \colon L \to M$ is a derivation of Lie algebras, then $d \colon (L,\Id_L) \to (M,\Id_M)$ is a derivation of Hom-Lie algebras.

\item[(b)] Let $(M, \alpha_M)$ be a Hom-$L$-module and consider the extension 
\[
\xymatrix{
	0 \ar[r] & (M, \alpha_M) \ar[r]^-{i} & (M \rtimes L, \widetilde{\alpha}) \ar[r]^-{\pi} &(L, \alpha_L) \ar[r] & 0.
}
\]
The Hom-linear map $\theta \colon M \rtimes L \to M$, $\theta(m, l) = m$ is a derivation.
\end{enumerate}
\end{example}

\begin{definition}
Let $s \colon (L, \alpha_L) \to (L, \alpha_L)$ be a homomorphism of Hom-Lie algebras and $M$ a Hom-$L$-module. Then, an \emph{$s$-derivation} is a Hom-linear homomorphism $d \colon (L, \alpha_L) \to (M, \alpha_M)$ making the following diagram commutative:
\[
\xymatrix{
 & (L, \alpha_L) \ar[dl]_-{d} \ar[rd]^-{s} \ar[d]^{h} & \\
 (M, \alpha_M) \ar@<.5ex>[r]^-{i} & (M \rtimes L, \widetilde{\alpha}) \ar@<.5ex>[l]^-{\theta} \ar[r]^-{\pi} & (L, \alpha_L)
}
\]
where $h(x) = \big(d(x), s(x)\big)$.
\end{definition}
As expected, $\Id_{L}$-derivations are just derivations (Definition~\ref{D:derivation}). We  denote the set of \mbox{$s$-derivations} by $\Der_{s}(L, M)$. Moreover, we define the \emph{$s$-semi-direct product} $M \rtimes_{s} L$ as the usual semi-direct product (Definition~\ref{D:semidirect}), but with a small modification in the bracket:
\[
[(m_1,x_1),(m_2,x_2)]= ([m_1,m_2]+s(x_1) \centerdot m_2 - s(x_2) \centerdot m_1, [x_1,x_2])
\] 
for all $x_1,x_2 \in L$ and $m_1, m_2 \in M$.

If we fix $s = \alpha_L$, then we obtain the notion of $\alpha$-derivations, which are exactly $\K$-linear maps $d \colon L \to M$ satisfying
\begin{align*}
d[x,y] &= \alpha_L(x) \centerdot d(y) - \alpha_L(y)
\centerdot d(x), \\
\alpha_M \circ d &= d \circ \alpha_L,
\end{align*}
for all $x, y \in L$.  

\begin{example}
	For a Hom-$L$-module  $(M,\alpha_M)$ and a fixed element $m \in M$ such that $m = \alpha_M(m)$, we define $d_m \colon L \to M$ as $d_m(x) = \alpha_L(x) \centerdot m$. 
	We call this kind of maps as \emph{inner $\alpha$-derivations} from $L$ to $M$ and we denote the set of all these $\alpha$-derivations by $\IDer_{\alpha}(L,M)$. An example of a Hom-$L$-module satisfying $m = \alpha_M(m)$ for any element $m \in M$ is given in Example~\ref{ex action}~(d).
\end{example}

\subsection{Cohomology} \hfill

Let $(L, \alpha_{L})$ be a Hom-Lie algebra and $(M,\alpha_{M})$ be a Hom-$L$-module. For $n\geq 0$, we denote by
\[
C_{\alpha}^{n}(L,M):=\{f\colon \bigwedge^{n}L\rightarrow M \mid f \text{ is a } \mathbb{K}\text{-linear map s.~t.\ }f \circ \alpha_{L}^{\wedge n}=\alpha_{M} \circ f\}
\]
the $\K$-vector space of $n$-cochains of $(L, \alpha_{L})$ with coefficients in $(M,\alpha_{M})$, where $\alpha_L^{\wedge n}(x_1\wedge \dots \wedge x_n) = \alpha_L(x_1) \wedge \dots \wedge \alpha_L(x_n)$.

For $n\geq 0$, we consider the $\mathbb{K}$-linear map,
\[
d^{n}\colon C_{\alpha}^{n}(L,M)\rightarrow C_{\alpha}^{n+1}(L,M)
\]
\begin{align*}
{} & d^{n}(f)(x_{1}\wedge\dots\wedge x_{n+1}) = \sum_{i=1}^{n+1}(-1)^{i+1}\alpha_{L}^{n}(x_{i})\centerdot f(x_{1}\wedge
\dots\wedge\widehat{x_{i}}\wedge\dots\wedge x_{n+1}) \\
{} &+ \sum_{i<j}(-1)^{i+j}f([x_{i},x_{j}]\wedge\alpha_{L}(x_{1})\wedge\dots
\wedge\widehat{\alpha_{L}(x_{i})}\wedge\dots\wedge\widehat{\alpha_{L}(x_{j}%
)}\wedge\dots\wedge\alpha_{L}(x_{n+1}))
\end{align*}
for all $m\in M$, $x_{i}\in L, i = 1, \dots, n+1$, with the hat denoting the omission of the respective term.

The identity $d^{n+1}\circ d^{n}=0$, holds for $n\geqslant0$ (see~\cite{Sheng}), consequently
$(C_{\alpha}^{*}(L,M),d^{*})$ is a well-defined cochain complex whose homology is called the \emph{cohomology of the
 Hom-Lie algebra $(L,\alpha_{L})$} with coefficients in the Hom-$L$-module $(M,\alpha_{M})$, and it is denoted by:
\[
H_{\alpha}^{*}(L,M):=H^{*}(C_{\alpha}^{*}(L,M),d^{*})
\]
We denote the cocycles, i.e.\ the elements of $\Ker (d^n)$, by $Z^n_{\alpha}(L, M)$, and the coboundaries or elements of $\Ima (d^{n-1})$, by $B_{\alpha}^{n}(L,M)$.

For $n=0$, a direct computation shows that
\[
H_{\alpha}^{0}(L,M)=\{ m\in M\mid \alpha_M(m) = m \text{ and } x\centerdot m = 0, \text{for all}\ x \in L\} = {^L}M.
\]
It is the subspace of the elements of $M$ that are $L$-invariants.

For $n=1$, 1-cocycles are $\mathbb{K}$-linear maps
$f\colon L\to M$ satisfying the identities
\begin{align*}
f\left[ x_{1},x_{2}\right] &= \alpha_{L}(x_{1}) \centerdot f(x_{2}) -\alpha_{L}(x_{2}) \centerdot f(x_{1}) \\
f \circ \alpha_{L} &= \alpha_{M}\circ f
\end{align*}
i.e.\ $f$ is an $\alpha$-derivation of $(L,\alpha_{L})$ to
$(M,\alpha_{M})$.
Coboundaries are exactly inner derivations. Therefore,  
\[
H^1_{\alpha} (L, M) \cong \dfrac{\Der_{\alpha}(L, M)}{\IDer_{\alpha}(L, M)}
\]

\section{Abelian extensions}\label{S:extensions}

An \emph{abelian extension} of Hom-Lie algebras $(E)$ is an exact sequence of Hom-Lie algebras $\xymatrix@C=1em{0 \ar[r] & (M,\alpha_M) \ar[rr]^{i} && (E,\alpha_E) \ar@<-.5ex>[rr]_{\pi} && (L,\alpha_L) \ar[r] \ar@<-.5ex>[ll]_{\sigma} & 0,}$ where $(M,\alpha_M)$ is an abelian Hom-Lie algebra, $i$ and $\pi$ are homomorphisms of Hom-Lie algebras and $\sigma$ is a Hom-linear section of $\pi$.

\begin{remark}
	We might find the case when a surjective homomorphism of Hom-vector spaces does not have a section, as we can see in the following examples. 
	Let $X$ be the $\K$-Hom-vector space generated by $x_1$, $x_2$ with $\alpha_X(x_1) = 0$, $\alpha_X(x_2) = x_1$ and let $Y$ be the $\K$-Hom-vector space generated by $y$ with $\alpha_Y(y) = 0$. Let us consider the surjective homomorphism $\pi \colon (X, \alpha_X) \to (Y, \alpha_Y)$, $\pi(x_1) = 0$, $\pi(x_2) = y$. If it had a Hom-linear section $\sigma$, it would mean that $y = \pi \circ \sigma(y) = \pi(\lambda_1 x_1 + \lambda_2 x_2) = \lambda_2 x_2$, forcing $\sigma(y) = \lambda_1 x_1 + x_2$. However,
	\[
	0 = \sigma \alpha_Y(y)	 = \alpha_X \sigma(y) = \lambda_1 \alpha_X(x_1) + \alpha_X(x_2) = x_1,
	\]
	obtaining a contradiction.
	
	Let $X$ be the $\K$-Hom-vector space generated by $\mathbb{N}$ generators $x_1$, \dots, where $\alpha_{X}(x_n) = x_{n+1}$, and let $Y$ be the $\K$-Hom-vector space with basis $y$ and $\alpha_Y = \Id_Y$. Let us consider the morphism $\pi(x_i) = y$. If it had a section $\sigma \colon Y \to X$, it would be of the form $\sigma(y) = \sum_{i > 0} k_ix_i$, where just a finite number of $k_i$ are different from zero. Then 
	\begin{align*}
	\sum_{i > 0} k_ix_i = \sigma(y) = \sigma(\alpha_Y(y)) = \alpha_X(\sigma(y)) = \sum_{i > 0} k_i\alpha_X(x_i) = \sum_{i > 0} k_ix_{i+1}.
	\end{align*}
	Since the $x_i$ are linearly independent, there is a contradiction.
\end{remark}

In general, an abelian extension induces a Hom-action from $(L,\alpha_L)$ on $(M,\alpha_M)$ defined as $x \centerdot m = [\sigma(x),i(m)]$, $x \in L$ and $m \in M$, and an easy computation shows that it does not depend on the choice of $\sigma$.

Fixing $(M, \alpha_M)$ and $(L, \alpha_L)$, we say that two abelian extensions $(E)$ and $(E')$ are \emph{equivalent} if there exists a homomorphism of Hom-Lie algebras $\varphi\colon (E, \alpha_E) \to (E', \alpha_{E'})$ making the following diagram commutative:
\[
\xymatrix{
	0 \ar[r]& (M,\alpha_{M}) \ar[r]^-{i} \ar@{=}[d] & (E,\alpha_{E}) \ar@<-.5ex>[r]_-{\pi} \ar[d]_{\varphi} & (L,\alpha_{L}) \ar[r] \ar@<-.5ex>[l]_-{\sigma} \ar@{=}[d]& 0\\
	0 \ar[r]& (M,\alpha_{M}) \ar[r]^-{i'} & (E',\alpha_{E'}) \ar@<-.5ex>[r]_-{\pi'} & (L,\alpha_{L}) \ar[r] \ar@<-.5ex>[l]_-{\sigma'} & 0
}
\]
Protomodularity (see~\cite{Borceux-Bourn}) implies that $\varphi$ is and isomorphism and therefore, this relation is indeed an equivalence relation.
Note that we do not need to worry about the choice of the section, since two extensions with the same $i$ and $\pi$ but different section $\sigma$, are always equivalent.

Let us now fix $M$ to be a Hom-$L$-module and $s\colon (L, \alpha_L) \to (L, \alpha_L)$ to be a homomorphism of Hom-Lie algebras. We say that an abelian extension $(E)$ is in particular an \emph{abelian $s$-extension} if $[\sigma(x), i(m)] = s(x)\centerdot m$.
We denote by $\Ext_{s}(L, M)$ the set of equivalence classes of $s$-abelian extensions.
An example of $s$-abelian extension is 
\[
\xymatrix{
	0 \ar[r] & (M,\alpha_M) \ar[r]^-{i} & (M \rtimes_{s} L,\widetilde{\alpha}) \ar@<-.5ex>[r]_-{\pi} & (L,\alpha_L) \ar[r] \ar@<-.5ex>[l]_-{\sigma} & 0
}
\]
and it will be called the \emph{trivial $s$-extension}.

Let $(E)$ be an $s$-abelian extension and let $\gamma \colon (L', \alpha_{L'}) \to (L, \alpha_L)$ and $s' \colon (L', \alpha_{L'}) \to (L', \alpha_{L'})$ be homomorphisms of Hom-Lie algebras such that $\gamma \circ s' = s \circ \gamma$. Since pullbacks preserve split extensions and kernels, we define the \emph{backward induced extension} by 
\[
\xymatrix{
	0 \ar[r]& (M,\alpha_{M}) \ar[r] \ar@{=}[d] & (E_{\gamma},\alpha_{E_{\gamma}}) \ophalfsplitpullbacktwo \ar@<-.5ex>[r] \ar[d] & (L',\alpha_{L'}) \ar[r] \ar@<-.5ex>[l] \ar[d]^-{\gamma}& 0\\
	0 \ar[r]& (M,\alpha_{M}) \ar[r]^-{i}  & (E,\alpha_{E}) \ar@<-.5ex>[r]_-{\pi} & (L,\alpha_{L}) \ar[r] \ar@<-.5ex>[l]_-{\sigma} & 0
}
\]
where $E_{\gamma}$ is the pullback of $\pi$ and $\gamma$. Then, the homomorphism $\gamma$ induces a map $\gamma^{*} \colon \Ext_{s}(L, M) \to \Ext_{s'}(L', M)$.

On the other hand, for any morphism of Hom-$L$-modules $\delta \colon (M,\alpha_{M}) \to (M',\alpha_{M'})$ together with a morphism of Hom-Lie algebras $s' \colon (E,\alpha_{E}) \to (E,\alpha_{E})$ such that $\pi \circ s' = s \circ \pi$, we define the \emph{forward induced extension} by 
\[
\xymatrix{
	0 \ar[r]& (M,\alpha_{M}) \ar[r]^-{i} \ar[d]_{\delta} & (E,\alpha_{E}) \ar@<-.5ex>[r]_-{\pi} \ar[d] & (L,\alpha_{L}) \ar[r] \ar@<-.5ex>[l]_-{\sigma} \ar@{=}[d] & 0\\
	0 \ar[r]& (M',\alpha_{M'}) \ar[r]  & (^{\delta}E, \alpha_{(^{\delta}E)}) \ar@<-.5ex>[r] & (L,\alpha_{L}) \ar[r] \ar@<-.5ex>[l] & 0
}
\]
where $\mbox{}^{\delta}E \cong (M' \rtimes_{s'} E) / T$ and $T = \{ (\delta(m), -i(m)) \mid m \in M \}$. The following computation shows that $T$ is indeed an ideal:
\begin{align*}
[(m', e'), (\delta(m), -i(m))] &= (s'(e') \centerdot \delta(m) + s'(i(m))\centerdot m', -[e', i(m)]) \\
{} &= (\pi s'(e')\centerdot \delta(m) + \pi s'i(m) \centerdot m', -[\sigma \pi (e'), i(m)]) \\
{} &= (\delta(s\pi(e')\centerdot m), -i(s\pi(e') \centerdot m)) \in T.
\end{align*}
Therefore, $\delta$ induces a map $\delta_{*}\colon \Ext_{s}(L, M) \to \Ext_{s'}(L, M')$. 

\begin{lemma}\label{L:universal_map}
	Let $(E)$ be an abelian $s$-extension.
	\begin{itemize}
		\item[(a)] The backward induced extension given by $\gamma \colon (L',\alpha_{L'}) \to (L,\alpha_{L})$ is split if and only if there exists a homomorphism of Hom-Lie algebras $\psi \colon (L',\alpha_{L'}) \to (E,\alpha_{E})$ such that $\pi \circ \psi = \gamma$. 
		\item[(b)] The forward induced extension given by $\delta \colon (M,\alpha_{M}) \to (M',\alpha_{M'})$ is split if and only if there is an $s$-derivation $\psi' \colon  (E,\alpha_{E}) \to (M',\alpha_{M'})$ such that $\psi' \circ i = \delta$.
	\end{itemize}
\end{lemma}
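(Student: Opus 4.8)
The plan is to prove each equivalence by exhibiting the required map from a splitting and, conversely, constructing a splitting from the required map. Both parts are really statements about the universal property of a pullback (for (a)) and a pushout-type quotient (for (b)), so the argument should be short once the diagrams are set up correctly.

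For part (a): suppose the backward induced extension splits, with section $\sigma_\gamma \colon (L',\alpha_{L'}) \to (E_\gamma,\alpha_{E_\gamma})$. Composing $\sigma_\gamma$ with the pullback projection $E_\gamma \to E$ yields a homomorphism $\psi \colon (L',\alpha_{L'}) \to (E,\alpha_E)$, and since the pullback projection over $L$ sends $E_\gamma \to L'$, chasing the square gives $\pi \circ \psi = \gamma$. Conversely, given $\psi$ with $\pi \circ \psi = \gamma$, the pair $(\psi,\Id_{L'})$ induces, by the universal property of the pullback $E_\gamma$ of $\pi$ and $\gamma$, a unique homomorphism $L' \to E_\gamma$ which one checks is a section of $E_\gamma \to L'$; compatibility with the Hom-endomorphisms is automatic because both $\psi$ and $\Id_{L'}$ commute with the respective $\alpha$'s and the pullback endomorphism is the one induced by the pair. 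Thus the backward induced extension splits.

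For part (b): suppose the forward induced extension splits. A splitting of $0 \to (M',\alpha_{M'}) \to (^{\delta}E,\alpha_{(^{\delta}E)}) \to (L,\alpha_L) \to 0$ gives, composing with $E \to {}^{\delta}E$ (induced by $e \mapsto (0,e)$) and then projecting onto the $M'$-coordinate, a Hom-linear map $\psi' \colon (E,\alpha_E) \to (M',\alpha_{M'})$; because the quotient is by $T = \{(\delta(m),-i(m))\}$ and because the retraction kills $L$, one reads off that $\psi' \circ i = \delta$ and that $\psi'$ satisfies the $s'$-derivation identity (here the bracket in $M' \rtimes_{s'} E$ is exactly the one needed for $\psi'$ to become an $s$-derivation after using $\pi \circ s' = s \circ \pi$). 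Conversely, given an $s$-derivation $\psi'$ with $\psi' \circ i = \delta$, the map $E \to M' \rtimes_{s'} E$, $e \mapsto (\psi'(e), -e)$ is a homomorphism whose image lands in a complement of $M'$ and contains $T$, so it descends to a homomorphism $E \to {}^{\delta}E$; combined with $M' \hookrightarrow {}^{\delta}E$ one obtains a retraction $ {}^{\delta}E \to M'$, equivalently a section of the extension, by the argument dual to the semi-direct product decomposition.

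The main obstacle I expect is not the diagram chase itself but the bookkeeping in part (b): one must verify carefully that the candidate map respects the Hom-endomorphisms $\alpha$ on passing to the quotient $^{\delta}E$, and that the $s$-derivation identity for $\psi'$ is \emph{exactly} the condition needed — this uses the modified bracket in $M' \rtimes_{s'} E$ together with the hypothesis $\pi \circ s' = s \circ \pi$ in the same way the displayed computation in the text shows $T$ is an ideal. Once that identity is pinned down, both directions of (b) are formal. Everything needed — that pullbacks preserve split extensions and kernels, and the explicit descriptions of $E_\gamma$ and $^{\delta}E$ — is already available above, so no further machinery is required.
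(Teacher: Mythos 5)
Your overall strategy is exactly the one the paper has in mind (the paper's own proof is just ``(a) is a direct consequence of $E_{\gamma}$ being a pullback; an easy computation checks (b)''), and your part (a) is a complete and correct rendering of that: split section composed with the pullback projection in one direction, the universal property of the pullback applied to the pair $(\psi,\Id_{L'})$ in the other.

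In part (b), however, the converse as written has a step that fails. The map $e \mapsto (\psi'(e),-e)$ is \emph{not} a homomorphism into $M'\rtimes_{s'}E$: its second component gives $[-e_1,-e_2]=[e_1,e_2]$, whereas the image of $[e_1,e_2]$ has second component $-[e_1,e_2]$, so the bracket is not preserved (the first components also mismatch by the same sign). The correct map is $e\mapsto(-\psi'(e),e)$; with the modified bracket of $M'\rtimes_{s'}E$, the action of $E$ on $M'$ factoring through $\pi$, and $\pi\circ s'=s\circ\pi$, the $s$-derivation identity for $\psi'$ is precisely what makes this a homomorphism of Hom-Lie algebras. Once this is fixed, the cleanest conclusion is not the retraction detour you sketch (a merely Hom-linear retraction onto $M'$ is not equivalent to a splitting; one needs the derivation property, as in the paper's example of $\theta\colon M\rtimes L\to M$): instead observe that the composite $E\to M'\rtimes_{s'}E\to{}^{\delta}E$ sends $i(m)$ to the class of $(-\delta(m),i(m))\in T$, hence vanishes on $i(M)$ and factors through $L\cong E/i(M)$, yielding a homomorphic section of ${}^{\delta}E\to L$; this gives the splitting directly. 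The forward direction of (b) is fine in substance, provided ``projecting onto the $M'$-coordinate'' is read as the projection with kernel the image of the splitting, and one uses that the induced action on $M'$ is independent of the chosen section to verify the $s$-derivation identity for $\psi'$.
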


\begin{proof}
	Part (a) is a direct consequence of $E_{\gamma}$ being a pullback. An easy computation checks part (b).
\end{proof}

Let $(E)$ and $(E')$ be two $s$-abelian extensions. We define their direct sum which comes equipped with a diagonal and a codiagonal map:
\[
\xymatrix{
	& & & L \ar[d]^{\bigtriangleup} & \\
	0 \ar[r] & M \oplus M \ar[d]^{\bigtriangledown} \ar[r]^-{i \oplus i'} & E \oplus E' \ar@<-.5ex>[r]_-{\pi \oplus \pi'} & L \oplus L \ar[r] \ar@<-.5ex>[l]_-{\sigma \oplus \sigma'} & 0 \\
	& M & & & 
}
\]
In this way, we introduce a sum operation as:
\[
(E) + (E') = \mbox{}^{\bigtriangledown}\big( (E \oplus E')_{\bigtriangleup} \big).
\]
Note that the order of applying $\bigtriangleup$ and $\bigtriangledown$ does not play any role, and this operation is associative and commutative. The zero element is the class of the trivial extension
\[
\xymatrix{
	0 \ar[r] & (M,\alpha_M) \ar[r]^-{i} & (M \rtimes_{s} L,\widetilde{\alpha}) \ar@<-.5ex>[r]_-{\pi} & (L,\alpha_L) \ar[r] \ar@<-.5ex>[l]_-{\sigma} & 0
}
\]
Moreover, for any scalar $k \in \K$,  we denote by $k_M$ the endomorphism of $M$ sending $m$ to $km$. Then we define $k \cdot (E)$ = $(^{k_{M}}E)$. In this way, we have introduced a vector space structure in $\Ext_{s}(M, L)$.

Also recall that for any short exact sequence of Hom-Lie algebras $(E)$
we can form its \emph{abelianisation} $\ab(E)$ in a natural way
\[
\xymatrix{
	(E) & 0 \ar[r] & (N, \alpha_{N}) \ar[d] \ar[r]^-{\xi} & (E, \alpha_{E}) \ar[d] \ar[r]^-{\pi} & (L, \alpha_{L}) \ar[d] \ar[r] & 0 \\
	(\ab(E)) &0 \ar[r] & (N^{\ab}, \alpha_{N^{\ab}}) \ar[r]^-{\xi'} & \bigg(\dfrac{E}{[N, N]}, \alpha_{\frac{E}{[N, N]}}\bigg) \ar[r]^-{\pi'} & (L, \alpha_{L}) \ar[r] & 0
}
\]

\begin{theorem}
	Let $\xymatrix@C=1em{0 \ar[r] & (N, \alpha_{N}) \ar[rr]^{\xi} && (E, \alpha_{E}) \ar@<-0.5ex>[rr]_{\pi} && (L, \alpha_{L}) \ar@<-0.5ex>[ll]_-{\sigma} \ar[r] & 0 }$ be a short exact sequence of Hom-Lie algebras where $\sigma$ is a Hom-linear section of $\pi$. Let $s= \alpha^n$ a multiple of $\alpha$, where $n \geq 0$. To every Hom-$L$-module $(A, \alpha_A)$ it corresponds a five-term natural exact sequence of Hom-vector spaces:
	\begin{equation}
	\begin{tikzpicture}[baseline=(current  bounding  box.center)]
	\matrix(m) [matrix of nodes,row sep=1em, column sep=2em, text height=2.8ex, text depth=1.5ex]
	{
		$0$  & $\Der_{s}(L, A)$  & $\Der_{s}(E, A)$ & \mbox{}  \\ 
		\mbox{}	& $\Hom(N^{\ab}, A)$  & $\Ext_{s}(L, A)$ & $\Ext_{s}(E, A)$ \\};
	\path[overlay,->, font=\scriptsize,>=angle 90]	
	(m-1-1) edge (m-1-2)
	(m-1-2) edge node[above] {$\Der_{s}(\pi)$} (m-1-3)
	(m-1-3) edge [out=355,in=175] node[pos=0.5,sloped,above] {$\zeta$} 	(m-2-2)
	(m-2-2) edge node[above] {$\vartheta^{*}$} (m-2-3)
	(m-2-3) edge node[above] {$\pi^{*}$}(m-2-4)
	;
	\end{tikzpicture}
	\end{equation}
	where $\zeta(d)(\bar{n}) = d\circ \xi(\bar{n})$ and $\vartheta^{*}(f) = f_{*}(\ab(E))$.
\end{theorem}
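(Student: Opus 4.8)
The strategy is the one familiar from classical five-term exact sequences, carried out with the backward/forward induced extension constructions and with Lemma~\ref{L:universal_map} as the key technical input. Throughout, two elementary facts are used repeatedly: $A$ is a Hom-$E$-module through $\pi$, so $e\centerdot a=\pi(e)\centerdot a$, and $\pi\circ\alpha_E^{n}=\alpha_L^{n}\circ\pi$ with $\pi\circ\xi=0$. The map $\Der_s(\pi)$ is precomposition with $\pi$; it lands in the $\alpha_E^{n}$-derivations because $d\pi[e_1,e_2]=\alpha_L^{n}(\pi e_1)\centerdot d\pi(e_2)-\alpha_L^{n}(\pi e_2)\centerdot d\pi(e_1)=\alpha_E^{n}(e_1)\centerdot d\pi(e_2)-\alpha_E^{n}(e_2)\centerdot d\pi(e_1)$. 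For $\zeta$, an $s$-derivation $d\colon E\to A$ satisfies $d([\xi(n_1),\xi(n_2)])=\alpha_E^{n}(\xi(n_1))\centerdot d\xi(n_2)-\alpha_E^{n}(\xi(n_2))\centerdot d\xi(n_1)$, which vanishes since $\alpha_E^{n}(\xi(n))\centerdot a=\alpha_L^{n}(\pi\xi(n))\centerdot a=0$; hence $d\circ\xi$ kills $[N,N]$ and descends to $\zeta(d)\in\Hom(N^{\ab},A)$, compatibly with the endomorphisms. Finally $\vartheta^{*}$ is the forward induced extension of $\ab(E)$ along $f$ with auxiliary endomorphism $\overline{\alpha_E^{n}}$ on $E/[N,N]$, and $\pi^{*}$ is the backward induced extension along $\pi$ with auxiliary endomorphism $\alpha_E^{n}$; both are legitimate instances of the constructions set up before the lemma, the second because $\pi\alpha_E^{n}=\alpha_L^{n}\pi$.

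Exactness at the first two spots is direct. Injectivity of $\Der_s(\pi)$ follows from surjectivity of $\pi$. For exactness at $\Der_s(E,A)$: one has $\zeta\circ\Der_s(\pi)=0$ since $(d'\circ\pi)\circ\xi=d'\circ(\pi\xi)=0$; conversely, if $\zeta(d)=0$ then $d$ vanishes on $\xi(N)=\Ker\pi$, so $d=d'\circ\pi$ for a unique Hom-linear $d'\colon L\to A$, and surjectivity of $\pi$ forces $d'$ to be an $s$-derivation. Exactness at $\Hom(N^{\ab},A)$ is the first application of Lemma~\ref{L:universal_map}: by part (b), $\vartheta^{*}(f)=f_{*}(\ab(E))$ splits if and only if there is an $s$-derivation $\psi'\colon E/[N,N]\to A$ with $\psi'\circ\xi'=f$, and precomposing with $E\twoheadrightarrow E/[N,N]$ this is equivalent to the existence of an $s$-derivation $d\colon E\to A$ with $d\circ\xi=f$, i.e.\ to $f\in\Ima\zeta$.

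For exactness at $\Ext_s(L,A)$ one uses part (a) of Lemma~\ref{L:universal_map}: for an $s$-extension $(E')=(0\to A\xrightarrow{i}G\xrightarrow{q}L\to 0)$, the extension $\pi^{*}(E')$ splits if and only if there is a homomorphism of Hom-Lie algebras $\psi\colon E\to G$ with $q\circ\psi=\pi$. The inclusion $\Ima\vartheta^{*}\subseteq\Ker\pi^{*}$ follows by taking $\psi$ to be $E\twoheadrightarrow E/[N,N]\to{}^{f}(E/[N,N])$. Conversely, from a $\psi$ lifting $\pi$ one gets $\psi\circ\xi\colon N\to\Ker q=A$, which vanishes on $[N,N]$ because $A$ is abelian, and hence descends to $\bar g\colon N^{\ab}\to A$; the assignment $(a,\bar e)\mapsto i(a)+\bar\psi(\bar e)$, where $\bar\psi\colon E/[N,N]\to G$ is induced by $\psi$, is a homomorphism of Hom-Lie algebras $\bar g_{*}(\ab(E))\to G$ compatible with the two sequences, hence an isomorphism by protomodularity, so $[(E')]=\vartheta^{*}(\bar g)$. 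Naturality in $A$ is a routine check that a morphism of Hom-$L$-modules commutes with all four maps, using functoriality of $\delta_{*}$ and $\gamma^{*}$.

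The main obstacle is the construction of this last isomorphism: one must verify that it kills the defining ideal $T=\{(\bar g(\bar n),-\xi'(\bar n))\}$ (which uses $\psi\xi=i\bar g$), that it respects the $s$-twisted brackets — the crucial identity being $[\psi(e),i(a)]=\alpha_L^{n}(\pi e)\centerdot a$, obtained by replacing $\psi(e)$ by $\sigma_G(\pi e)$ modulo the abelian ideal $i(A)$ and invoking that $(E')$ is an $s$-extension — and that it intertwines the endomorphisms. A secondary point, worth clarifying before the proof, is that $\ab(E)$ is in general not itself an $s$-extension, which is why $\vartheta^{*}$ must be read through the forward induced extension construction with auxiliary endomorphism $\overline{\alpha_E^{n}}$, the maps in the image of $\zeta$ being module morphisms for the (suitably twisted) $L$-module structure on $N^{\ab}$ induced by $\ab(E)$.
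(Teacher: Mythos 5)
Your proof is correct and takes essentially the same route as the paper: the identical well-definedness computation for $\zeta$ (the action of $E$ on $A$ factors through $\pi$, so $s$-derivations kill $[\xi(N),\xi(N)]$) and the same two applications of Lemma~\ref{L:universal_map}, part (b) for exactness at $\Hom(N^{\ab},A)$ and part (a), via the forward and backward induced extensions, for exactness at $\Ext_{s}(L,A)$. You simply make explicit what the paper leaves as ``easy to see'', notably the equivalence between $\bar g_{*}(\ab(E))$ and the given extension $(E')$ and the twisted module structure on $N^{\ab}$.
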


\begin{proof}
	First of all, the commutativity required of $s$ in the previous definitions is automatically satisfied by the properties of Hom-Lie algebras.
	Exactness in $\Der_{s}(L, A)$ is straightforward. Moreover, for any $d \in \Der_{s}(E, A)$, we have that
	\begin{align*}
	d([\xi(n), \xi(n')]) &= \xi(n) \centerdot d(\xi(n')) - \xi(n') \centerdot d(\xi(n)) \\
	{} &= \pi \xi(n) \centerdot d(\xi(n')) - \pi \xi(n') \centerdot d(\xi(n)) = 0.
	\end{align*}
	Then, $\zeta$ is well defined and our sequence is clearly exact in $\Der_{s}(E, A)$. Exactness in $\Hom(N^{\ab}, A)$ is an immediate consequence of Lemma~\ref{L:universal_map}~(b). Let us check now that $\Ima (\vartheta^{*}) = \Ker (\pi^{*})$. Given any Hom-linear homomorphism $f \colon N^{\ab} \to A$, we can consider a morphism from $(E)$ to $(\mbox{}^{f}E)$, so Lemma~\ref{L:universal_map}~(a) implies that $\Ima (\vartheta^{*}) \subseteq \Ker (\pi^{*})$. 
	Conversely, given an extension 
	\[
	\xymatrix{0 \ar[r] & (A,\alpha_A) \ar[r]^{\xi'} & (E',\alpha_{E'}) \ar@<-.5ex>[r]_-{\pi'} & (L,\alpha_L) \ar[r] \ar@<-.5ex>[l]_-{\sigma'} & 0}
	\]
	in the kernel of $\pi^{*}$, Lemma~\ref{L:universal_map}~(a) implies that there exists a Hom-Lie algebra morphism $\psi \colon  (E,\alpha_{E}) \to  (E',\alpha_{E'})$ such that $\pi = \pi' \circ \psi$. This morphism yields a map $\psi' \colon N^{\ab} \to A$ and it is easy to see that $\vartheta^{*}(\psi') = (E')$.
\end{proof}

Fixing $s$ and all $\alpha$ as identity maps, we recover a classical result in Lie algebras~\cite{Wei}. Moreover, fixing just $s$ as the identity map, we obtain the following corollary:
\begin{corollary}
	Let $\xymatrix@C=1em{0 \ar[r] & (N, \alpha_{N}) \ar[rr]^{\xi} && (E, \alpha_{E}) \ar@<-0.5ex>[rr]_{\pi} && (L, \alpha_{L}) \ar@<-0.5ex>[ll]_-{\sigma} \ar[r] & 0 }$ be a short exact sequence of Hom-Lie algebras where $\sigma$ is a Hom-linear section of $\pi$. To every Hom-$L$-module $(A, \alpha_A)$ it corresponds a five-term natural exact sequence of Hom-vector spaces:
	\begin{equation}
	\begin{tikzpicture}[baseline=(current  bounding  box.center)]
	\matrix(m) [matrix of nodes,row sep=1em, column sep=2em, text height=2.8ex, text depth=1.5ex]
	{
		$0$  & $\Der(L, A)$  & $\Der(E, A)$ & \mbox{}  \\ 
		\mbox{}	& $\Hom(N^{\ab}, A)$  & $\Ext(L, A)$ & $\Ext(E, A)$ \\};
	\path[overlay,->, font=\scriptsize,>=angle 90]	
	(m-1-1) edge (m-1-2)
	(m-1-2) edge node[above] {$\Der(\pi)$} (m-1-3)
	(m-1-3) edge [out=355,in=175] node[pos=0.5,sloped,above] {$\zeta$} 	(m-2-2)
	(m-2-2) edge node[above] {$\vartheta^{*}$} (m-2-3)
	(m-2-3) edge node[above] {$\pi^{*}$}(m-2-4)
	;
	\end{tikzpicture}
	\end{equation}
\end{corollary}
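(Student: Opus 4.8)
The plan is to obtain the Corollary as the special case $n = 0$ of the preceding theorem. Since $\alpha_L^{0} = \Id_L$, applying the theorem with $s = \Id_L$ produces a five-term exact sequence, and what remains is purely a matter of identifying its terms and maps with those in the statement of the Corollary.

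First I would unwind the definitions for $s = \Id_L$. As observed just after the definition of $s$-derivation, $\Id_L$-derivations coincide with derivations in the sense of Definition~\ref{D:derivation}, so $\Der_{\Id_L}(L, A) = \Der(L, A)$ and $\Der_{\Id_L}(E, A) = \Der(E, A)$, the latter with $E$ acting on $A$ through $\pi$ exactly as in the proof of the theorem. Similarly, the $\Id_L$-semi-direct product $A \rtimes_{\Id_L} L$ is the ordinary semi-direct product of Definition~\ref{D:semidirect}, so an abelian $\Id_L$-extension is just an abelian extension and the trivial $\Id_L$-extension is the usual one; hence $\Ext_{\Id_L}(L, A) = \Ext(L, A)$ and $\Ext_{\Id_L}(E, A) = \Ext(E, A)$ as vector spaces, the sum $(E)+(E')$ and the scalars $k\cdot(E)$ built from $\bigtriangleup$, $\bigtriangledown$ and $k_A$ making no reference to $s$.

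Next I would observe that every auxiliary morphism needed to form the backward- and forward-induced extensions appearing in the five-term sequence (the map $s'$ with $\pi \circ s' = s \circ \pi$, and its analogues) may simply be taken to be an identity map when $s = \Id_L$, and that the compatibility of $s$ with the Hom-structures is then trivial. With those choices, $\Der(\pi)$ is restriction along $\pi$, $\zeta$ sends $d$ to $\bar n \mapsto d\circ\xi(\bar n)$, $\vartheta^{*}$ sends $f$ to $f_{*}(\ab(E))$, and $\pi^{*}$ is backward induction along $\pi$ — precisely the maps of the Corollary — so exactness at the four internal spots is inherited verbatim from the theorem. There is essentially no obstacle here: the only work is the bookkeeping just described, since the constructions of Section~\ref{S:extensions} do not see the twist $s$ once $s$ is the identity. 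Should a self-contained argument be preferred, the four exactness statements could instead be re-derived directly from Lemma~\ref{L:universal_map} by repeating the proof of the theorem with $\Id_L$ in place of $s$ throughout.
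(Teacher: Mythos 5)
Your proposal is correct and matches the paper's own route: the paper obtains this corollary precisely by taking $s=\Id_L$ (the case $n=0$, $s=\alpha^0$) in the preceding theorem, using that $\Id_L$-derivations are ordinary derivations and that the $\Id_L$-semi-direct product is the usual one, so $\Der_{\Id}=\Der$ and $\Ext_{\Id}=\Ext$. The identification bookkeeping you describe is exactly the content the paper leaves implicit, so nothing further is needed.
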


\subsection{Fixing \texorpdfstring{$s = \alpha$}{s = alpha}} \hfill

Let $(L,\alpha_{L})$ be a Hom-Lie algebra, $(M,\alpha_{M})$ a Hom-$L$-module and
$w\in Z_{\alpha}^{2}(L, M) = \Ker(d^{2})$, i.e.\ a 2-cocycle, then we can construct the abelian $\alpha$-extension
of $(L,\alpha_{L})$ by $(M,\alpha_{M})$
\begin{equation} \label{extension}
\xymatrix{
0 \ar[r] & (M,\alpha_{M}) \ar[r]^-{\chi} & (M\oplus_{w} L,\widetilde{\alpha}) \ar@<-.5ex>[r]_-{\pi} &  (L,\alpha_{L}) \ar@<-.5ex>[l]_-{\sigma} \ar[r] & 0
}
\end{equation}
where $\sigma(l) = (0, l)$ and the bracket in $(M\oplus_{w}L,\widetilde{\alpha})$ is given by
\[
[(m, l),(m', l')] = (\alpha_L(l) \centerdot m'- \alpha_L(l') \centerdot m + \omega(l, l'),\left[
l, l' \right])
\]
and $\widetilde{\alpha} (m,l) = (\alpha_{M}(m) ,\alpha_{L}(l))$.

In order to have a Hom-Lie algebra structure on $(M\oplus_{w}L,[-,-],\widetilde{\alpha})$, the Hom-Jacobi identity must be satisfied. This is indeed true whenever $w$ satisfies the following identity:
\begin{align} \label{cociclo Lie}
&\alpha_{L}^{2}(l) \centerdot w(l^{\prime
}, l^{\prime\prime}) +w(\alpha
_{L}(l) ,\left[ l^{\prime},l^{\prime\prime}\right])\nonumber\\
+&\alpha_{L}^{2}(l^{\prime\prime}) \centerdot w(
l, l^{\prime})  +w(\alpha
_{L}(l^{\prime\prime}) ,\left[ l,l^{\prime}\right])  \\
+&\alpha_{L}^{2}(l^{\prime}) \centerdot w(
l^{\prime\prime} , l )
+w(\alpha_{L}(l^{\prime}) ,\left[ l^{\prime\prime
},l\right]) = 0. \nonumber
\end{align}

Now we show that the extension (\ref{extension}) is an abelian $\alpha$-extension.
First of all, the following diagram is commutative:
\[
\xymatrix{
	M \ar[r]^-{\chi} \ar[d]_-{\alpha_M} & M\oplus_{w}L \ar[r]^-{\pi} \ar[d]^-{\widetilde{\alpha}}
	& L \ar[d]^-{\alpha_L} \\
	M \ar[r]^-{\chi}& M\oplus_{w}L \ar[r]^-{\pi}
	& L}
\]
\[
\widetilde{\alpha}({\chi} (m))
=\widetilde{\alpha}(m,0) =(\alpha_{M}(m)
,\alpha_{L}(0)) =(\alpha_{M}(m)
,0) ={\chi} (\alpha_{M}(m))
\]
\[
\alpha_{L}(\pi(m,l)) =\alpha_{L}(l) =\pi(\alpha_{M}(m) ,\alpha_{L}(l)) = \pi(\widetilde{\alpha}(m,l))
\]

Secondly, the Hom-$L$-module structure of $(M,\alpha_{M})$ induced by the extension
coincides with the Hom-$L$-module structure of $(\alpha_{L}(L), \alpha_{L\mid})$ on
$(M,\alpha_{M})$. Namely, the action induced by the extension is: 
\[
[\sigma(l), \chi(m)]=\left[ (0,l) ,(m,0)
\right] =(\alpha_L(l) \centerdot m-\alpha_L(0) \centerdot 0+w(l,0) ,\left[ l,0\right])
\equiv\alpha_{L}(l) \centerdot m
\]
Finally, $(M,\alpha_{M})$ is an abelian Hom-Lie algebra since:
\[
\left[ m,m^{\prime}\right] =
\left[ (m,0),(m^{\prime},0) \right] =
(\alpha_L(0)\centerdot m^{\prime} - \alpha_L(0) \centerdot m+w(0,0), \left[ l,0\right]) =(0,0)
\]
and $\pi$ has a linear Hom-section $\sigma \colon L \to M \oplus_{\omega} L, \sigma(l)=(0,l)$.

Moreover, the following computation shows that any bilinear map $w\colon L \wedge L \to L$ satisfying equation~\eqref{cociclo Lie} is a 2-cocycle:
\begin{align*}
d^2(w (l\wedge l^{\prime
}\wedge l^{\prime\prime})) &=\alpha_{L}^{2}(l)\centerdot
w(l^{\prime}\wedge l^{\prime\prime})-\alpha_{L}^{2}(l^{\prime})\centerdot
w(l\wedge l^{\prime\prime})+\alpha_{L}^{2}(l^{\prime\prime})\centerdot
w(l\wedge l^{\prime}) \\
{} &\quad -w(\left[ l,l^{\prime}\right] \wedge
\alpha_{L}(l^{\prime\prime})) - w(\left[
l,l^{\prime\prime}\right] \wedge\alpha_{L}(l^{\prime}))
+w(\left[ l^{\prime},l^{\prime\prime}\right] \wedge\alpha_{L}(
l)) \\
{} &= 0.
\end{align*}

\begin{proposition} Let $(L,\alpha_{L})$ be a Hom-Lie algebra and $(M,\alpha_{M})$ a
	Hom-$L$-module. Every class of abelian $\alpha$-extensions in $\Ext_{\alpha}(L,M)$ can be
	represented by an abelian $\alpha$-extension of the form
	\[
	\xymatrix{
		0 \ar[r] & (M,\alpha_{M}) \ar[r]^-{\chi} & (M\oplus_{w} L,\widetilde{\alpha}) \ar@<-.5ex>[r]_-{\pi} &  (L,\alpha_{L}) \ar@<-.5ex>[l]_-{\sigma} \ar[r] & 0.
	}
	\]
\end{proposition}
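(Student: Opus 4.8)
The plan is to split $(E)$ as a $\K$-vector space by means of its Hom-linear section and then read off a $2$-cocycle from the transported bracket. Let $(E)\colon 0 \to (M,\alpha_{M}) \xrightarrow{i} (E,\alpha_{E}) \xrightarrow{\pi} (L,\alpha_{L}) \to 0$ be an abelian $\alpha$-extension with Hom-linear section $\sigma$. Since $\pi\circ i = 0$, $\pi\circ\sigma = \Id_{L}$ and $i$ is injective, one has $E = i(M)\oplus\sigma(L)$ as $\K$-vector spaces, so $\varphi\colon M\oplus L \to E$, $\varphi(m,l) = i(m)+\sigma(l)$, is a linear isomorphism. From $i\circ\alpha_{M} = \alpha_{E}\circ i$ and $\sigma\circ\alpha_{L} = \alpha_{E}\circ\sigma$ (Hom-linearity of $i$ and $\sigma$) it follows at once that $\varphi\circ\widetilde{\alpha} = \alpha_{E}\circ\varphi$, where $\widetilde{\alpha}(m,l) = (\alpha_{M}(m),\alpha_{L}(l))$.

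Next I would transport the bracket of $(E)$ along $\varphi$. Expanding $[\,i(m)+\sigma(l),\ i(m')+\sigma(l')\,]_{E}$ by bilinearity, the term $[i(m),i(m')]_{E} = i([m,m'])$ vanishes because $(M,\alpha_{M})$ is abelian; the two mixed terms contribute (up to sign) the action, and here it is essential that $(E)$ is an $\alpha$-extension and not merely an abelian extension, since this means precisely $[\sigma(x),i(m)]_{E} = i(\alpha_{L}(x)\centerdot m)$; finally, $\pi[\sigma(l),\sigma(l')]_{E} = [l,l']$, so $[\sigma(l),\sigma(l')]_{E} - \sigma([l,l']) \in \Ker\pi = i(M)$ and we may define $w(l,l')\in M$ by $i(w(l,l')) = [\sigma(l),\sigma(l')]_{E} - \sigma([l,l'])$. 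Collecting terms, $\varphi$ identifies the bracket of $(E)$ with $[(m,l),(m',l')] = (\alpha_{L}(l)\centerdot m' - \alpha_{L}(l')\centerdot m + w(l,l'),\,[l,l'])$, that is, with the bracket of $M\oplus_{w}L$.

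It then remains to check that $w$ is a $2$-cocycle. First, $w$ is $\alpha$-equivariant: using multiplicativity of $\alpha_{E}$ and $\alpha_{L}$ together with Hom-linearity of $i$ and $\sigma$, one obtains $i\bigl(w(\alpha_{L}(l),\alpha_{L}(l'))\bigr) = \alpha_{E}\bigl([\sigma(l),\sigma(l')]_{E} - \sigma([l,l'])\bigr) = i\bigl(\alpha_{M}(w(l,l'))\bigr)$, whence $w\circ\alpha_{L}^{\wedge 2} = \alpha_{M}\circ w$ and $w\in C_{\alpha}^{2}(L,M)$. Second, writing the Hom-Jacobi identity of $(E)$ for the three elements $\sigma(l),\sigma(l'),\sigma(l'')$ and taking the $i(M)$-component yields precisely identity~\eqref{cociclo Lie} for $w$; by the computation preceding this proposition, this gives $d^{2}(w) = 0$, so $w\in Z_{\alpha}^{2}(L,M)$. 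Hence $\varphi$ is an isomorphism of Hom-Lie algebras from $(E)$ onto $(M\oplus_{w}L,\widetilde{\alpha})$ which restricts to the identity on $M$ and induces the identity on $L$; thus $(E)$ is equivalent to the abelian $\alpha$-extension~\eqref{extension} determined by $w$. I expect the only real bookkeeping to be tracking the occurrences of $\alpha_{L}$ (which appear exactly because $(E)$ is an $\alpha$-extension), the one genuine use of multiplicativity of the structure maps being the $\alpha$-equivariance of $w$.
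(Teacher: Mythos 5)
Your proposal is correct and follows essentially the same route as the paper: you define the same cocycle $w(l,l') = i^{-1}\bigl([\sigma(l),\sigma(l')]-\sigma[l,l']\bigr)$ and realise the equivalence by the same map $(m,l)\mapsto i(m)+\sigma(l)$, merely spelling out the "routine calculation" (the vector-space splitting, the $\alpha$-equivariance of $w$, and the extraction of identity~\eqref{cociclo Lie} from the Hom-Jacobi identity in $E$) that the paper leaves implicit.
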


\begin{proof}
	Let $(E)$ be the following abelian $\alpha$-extension
	\begin{equation*}
	\xymatrix{0 \ar[r] & (M,\alpha_M) \ar[r]^{i} & (E,\alpha_E) \ar@<-.5ex>[r]_{\pi'} & (L,\alpha_L) \ar[r] \ar@<-.5ex>[l]_-{\sigma'} & 0}.
	\end{equation*}
	We define the $\mathbb{K}$-bilinear map 
	\[
	w\colon L \wedge L\to M
	\]
	by
	\[
	w(x_{1}\wedge
	x_{2}) = i^{-1}([ \sigma'(x_{1}),\sigma'(x_{2})] -\sigma'[ x_{1},x_{2}]).
	\]
	A routine calculation shows that $w \in Z^2_{\alpha}(L,M)$. Moreover $w$ satisfies equation~\eqref{cociclo Lie} thanks to the 2-cocycle condition. Consequently, we can construct the abelian \mbox{$\alpha$-extension}
	$\xymatrix@C=1em{0 \ar[r] & (M,\alpha_{M}) \ar[rr]^-{\chi} && (M\oplus_{w} L,\widetilde{\alpha}) \ar@<-.5ex>[rr]_-{\pi} &&  (L,\alpha_{L}) \ar@<-.5ex>[ll]_-{\sigma} \ar[r] & 0}$, which is equivalent to $(E)$ since the
	Hom-Lie algebras homomorphism ${\Phi\colon (M\oplus_{w}L,\widetilde{\alpha}) \to (E,\alpha_{E})}$ given by $\Phi\colon M\oplus_{w}L\longrightarrow E$, $\Phi (m,l) = i(
	m) +\sigma'(l)$ makes the following diagram commutative:
	\[
	\xymatrix{
		0 \ar[r]& (M,\alpha_{M}) \ar@{=}[d] \ar[r]^{\chi \quad}	& (M\oplus_{w}L,\widetilde{\alpha}) \ar@{-->}[d]_{\Phi} \ar@<-.5ex>[r]_-{\pi} & (L,\alpha_{L}) \ar@{=}[d] \ar@<-.5ex>[l]_-{\sigma} \ar[r] &0\\
		0 \ar[r] & (M,\alpha_{M}) \ar[r]^{i} & (E,\alpha_{E}) \ar@<-.5ex>[r]_-{\pi'} & (L,\alpha_{L}) \ar[r] \ar@<-.5ex>[l]_-{\sigma'} & 0
	}
	\]
\end{proof}

\begin{proposition} Two abelian $\alpha$-extensions
	\[
	\xymatrix{0 \ar[r] & (M,\alpha_{M}) \ar[r]^-{\chi} & (M\oplus_{w} L,\widetilde{\alpha}) \ar@<-.5ex>[r]_-{\pi} &  (L,\alpha_{L}) \ar@<-.5ex>[l]_-{\sigma} \ar[r] & 0}
	\]
	and
	\[
	\xymatrix{0 \ar[r] & (M,\alpha_{M}) \ar[r]^-{\chi'} & (M\oplus_{w'} L,\widetilde{\alpha}') \ar@<-.5ex>[r]_-{\pi'} &  (L,\alpha_{L}) \ar@<-.5ex>[l]_-{\sigma'} \ar[r] & 0}
	\]
 in $\Ext_{\alpha}(L,M)$ are equivalent if and only if $w$ and
	$w^{\prime}$ are cohomologous.
\end{proposition}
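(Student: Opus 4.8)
\emph{Proof proposal.} The plan is to show that giving an equivalence between the two displayed abelian $\alpha$-extensions amounts precisely to giving a $1$-cochain $t\in C_\alpha^1(L,M)$ with $w-w'=d^1(t)$.

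For the ``only if'' direction, suppose $\varphi\colon(M\oplus_w L,\widetilde{\alpha})\to(M\oplus_{w'}L,\widetilde{\alpha}')$ is a homomorphism of Hom-Lie algebras fitting into the equivalence diagram, i.e.\ $\varphi\circ\chi=\chi'$ and $\pi'\circ\varphi=\pi$ (by the remark following the definition of equivalence we need not track the sections). Writing $\varphi(m,l)=(a(m,l),b(m,l))$, the relation $\pi'\circ\varphi=\pi$ forces $b(m,l)=l$, while $\varphi\circ\chi=\chi'$ forces $a(m,0)=m$; by $\K$-linearity $a(m,l)=m+t(l)$ where $t:=a(0,-)\colon L\to M$ is $\K$-linear. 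Thus $\varphi(m,l)=(m+t(l),l)$. Compatibility of $\varphi$ with $\widetilde{\alpha}$ and $\widetilde{\alpha}'$ is then equivalent to $\alpha_M\circ t=t\circ\alpha_L$, that is, $t\in C_\alpha^1(L,M)$.

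Next I would impose that $\varphi$ preserves brackets. Expanding the bracket on $M\oplus_w L$ and on $M\oplus_{w'}L$ and comparing the $M$-components of $\varphi([(m,l),(m',l')])$ and $[\varphi(m,l),\varphi(m',l')]$, the terms involving $m$ and $m'$ cancel, leaving
\[
w(l,l')+t([l,l'])=\alpha_L(l)\centerdot t(l')-\alpha_L(l')\centerdot t(l)+w'(l,l'),
\]
that is, $(w-w')(l\wedge l')=\alpha_L(l)\centerdot t(l')-\alpha_L(l')\centerdot t(l)-t([l,l'])=d^1(t)(l\wedge l')$. Hence $w$ and $w'$ are cohomologous.

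For the converse, given $t\in C_\alpha^1(L,M)$ with $w-w'=d^1(t)$, I would define $\varphi\colon M\oplus_w L\to M\oplus_{w'}L$ by $\varphi(m,l)=(m+t(l),l)$ and run the same computations in reverse: $t(0)=0$ gives $\varphi\circ\chi=\chi'$, $\pi'\circ\varphi=\pi$ is immediate, the $\alpha$-cochain condition on $t$ gives compatibility with the twisting endomorphisms, and $w-w'=d^1(t)$ is exactly what makes $\varphi$ a bracket homomorphism; thus $\varphi$ fits the equivalence diagram, and it is an isomorphism by protomodularity (or directly, with inverse $(m,l)\mapsto(m-t(l),l)$), so the two extensions are equivalent. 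I do not expect any real obstacle here: the whole content is the bookkeeping observation that changing the $1$-cochain $t$ realizing the equivalence modifies the cocycle exactly by $d^1(t)$. The only points needing care are matching the signs between the formula for the bracket on $M\oplus_w L$ and the explicit Chevalley--Eilenberg-type differential $d^1$ of Section~\ref{S:cohomology}, and noticing that the $\alpha$-compatibility built into $C_\alpha^1(L,M)$ is precisely the condition that makes $\varphi$ commute with $\widetilde{\alpha}$ and $\widetilde{\alpha}'$.
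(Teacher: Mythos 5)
Your proposal is correct and follows essentially the same route as the paper's proof: identify the equivalence $\varphi(m,l)=(m+\theta(l),l)$, use compatibility with the twisting maps to see $\theta\in C^1_{\alpha}(L,M)$, and read off from bracket preservation that $w-w'=d^1(\theta)$, reversing the computation for the converse. No gaps; the sign bookkeeping you flag matches the paper's equation relating $w-w'$ to $d^1\circ\theta$.
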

\begin{proof}
	Assume that there exists a homomorphism $\phi\colon (M\oplus_{w}L,\widetilde{\alpha}) \to (
	M\oplus_{w^{\prime}}L,\widetilde{\alpha}^{\prime})$ such that the following diagram is commutative:
	\[
	\xymatrix{
		0 \ar[r]& (M,\alpha_{M}) \ar@{=}[d] \ar[r]^-{\chi}
		& (M\oplus_{w}L,\widetilde{\alpha})
		\ar@{-->}[d]_-{\phi} \ar@<-.5ex>[r]_-{\pi} & (L,\alpha_{L}) \ar@<-.5ex>[l]_-{\sigma} \ar@{=}[d] \ar[r] &0\\
		0 \ar[r]& (M,\alpha_{M}) \ar[r]^-{\chi'}
		& (M\oplus_{w'} L,\widetilde{\alpha}')
		\ar@<-.5ex>[r]_-{\pi'} & (L,\alpha_{L}) \ar@<-.5ex>[l]_-{\sigma'} \ar[r] &0}
	\]
	
	The $\K$-linear map $\phi$ is necessarily of the form $\phi(m,l)
	=(m+\theta(l) ,l)$ where $\theta\colon L\to M$ is a $\mathbb{K}$-linear map.
	Moreover, since $\phi$ is a homomorphism of Hom-Lie algebras, then the following computations
	\begin{align*}
	\phi(\widetilde{\alpha}(m,l)
	) &= \phi(\alpha_{M}(m) ,\alpha_{L}(l)
	) =(\alpha_{M}(m) +\theta (\alpha_{L}(
	l)) ,\alpha_{L}(l)) \\
	\widetilde{\alpha}^{\prime}(\phi(
	m,l)) &=\widetilde{\alpha}^{\prime}(m+\theta (
	l) ,l) =(\alpha_{M}(m) +\alpha_{M}(
	\theta (l)) ,\alpha_{L}(l))
	\end{align*}
	imply that $\theta \circ \alpha_{L}=\alpha_{M} \circ \theta$.
	
	On the other hand, we have that
	\begin{align*}
	\phi\left[ (m,l) ,(m^{\prime},l^{\prime}) \right] &=\phi(\alpha_{L}(l) \centerdot m^{\prime}-\alpha_{L}(l^{\prime}) \centerdot m+w(l \wedge
	l^{\prime}) ,\left[ l,l^{\prime}\right])  \\
	{} &=(\alpha_{L}(l) \centerdot
	m^{\prime}-\alpha_{L}(l^{\prime}) \centerdot m+w(l \wedge l^{\prime
	}) +\theta \left[ l,l^{\prime}\right] ,\left[ l,l^{\prime}\right]) \\
	\\
	\left[ \phi(m,l) ,\phi(m^{\prime}%
	,l^{\prime}) \right] &=\left[ (m+\theta(l)
	,l) ,(m^{\prime}+\theta (l^{\prime}) ,l^{\prime
	}) \right] \\
	{} &= (\alpha_{L}(l)
	\centerdot (m^{\prime}+\theta (l^{\prime})) -\alpha
	_{L}(l^{\prime}) \centerdot (m+\theta (l))
	+w^{\prime}(l \wedge l^{\prime})
	,\left[ l,l^{\prime}\right]) 
	\end{align*}
	Both expressions coincide if and only if
	\begin{equation} \label{coborde Lie}
	w(l \wedge l^{\prime}) -w^{\prime}(l \wedge l^{\prime
	}) =\alpha_{L}(l) \centerdot \theta (l^{\prime})
	-\alpha_{L}(l^{\prime}) \centerdot \theta(l)
	-\theta\left[ l,l^{\prime}\right]
	\end{equation}
	for all $l, l' \in L$.
	
	From equation~\eqref{coborde Lie} we derive that $w(l \wedge l^{\prime
	}) -w^{\prime}(l \wedge l^{\prime}) =d^{1}\circ \theta (
	l,l^{\prime})$, hence $w-w^{\prime}\in BL_{\alpha}^{1}(L, M)
	=\Ima (d^{1})$.

	Conversely, if $w$ and $w^{\prime}$ are cohomologous, then there exists a $\mathbb{K}$-linear map $\theta \colon L \to M$ such that
	$\theta \circ \alpha_{L}=\alpha_{M} \circ \theta$ and $w-w^{\prime}=d^1 \circ \theta$.
	
	If we define $\phi\colon (M\oplus_{w}L, \widetilde{\alpha})\to (M\oplus_{w^{\prime}}L, \widetilde{\alpha'})$ by
	$\phi (m,l) =(m+\theta(l) ,l)$, then $\phi$ is a homomorphism of Hom-Lie algebras making the above diagram commutative.
\end{proof}

\begin{theorem} Let $(L,\alpha_{L})$ be a Hom-Lie algebra and $(M,\alpha_{M})$ a
	Hom-$L$-module. Then there exists a bijection
	\[
	\Ext_{\alpha}(L,M) \cong H_{\alpha}^{2}(L,M).
	\]
	The zero is represented by the canonical $\alpha$-abelian extension
	\[
	\xymatrix@C=1em{0 \ar[r] & (M,\alpha_M) \ar[rr]^-{i} && (M \rtimes_{\alpha} L, \widetilde{\alpha}) \ar@<-.5ex>[rr]_-{\pi'} && (L,\alpha_L) \ar[r] \ar@<-.5ex>[ll]_-{\sigma'} & 0}
	\]
	
\end{theorem}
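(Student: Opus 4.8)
The plan is to assemble the bijection out of the two propositions just proved. I would first define a map
\[
\Psi \colon Z^2_{\alpha}(L, M) \longrightarrow \Ext_{\alpha}(L, M), \qquad w \longmapsto \big[\,(M \oplus_{w} L, \widetilde{\alpha})\,\big]
\]
sending a $2$-cocycle to the equivalence class of the abelian $\alpha$-extension~\eqref{extension} attached to it. This is well defined by the discussion preceding the statement: a $2$-cocycle $w \in Z^2_{\alpha}(L, M) = \Ker(d^2)$ satisfies equation~\eqref{cociclo Lie}, which is exactly the condition making $(M \oplus_{w} L, \widetilde{\alpha})$ a Hom-Lie algebra, and the sequence~\eqref{extension} was checked there to be a genuine abelian $\alpha$-extension whose induced Hom-$L$-module structure on $(M, \alpha_M)$ is the given one.

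Next I would pass to cohomology. The first of the two preceding propositions shows that every class in $\Ext_{\alpha}(L, M)$ is represented by an extension of the form~\eqref{extension}, so $\Psi$ is surjective. The second proposition shows that $M \oplus_{w} L$ and $M \oplus_{w'} L$ are equivalent if and only if $w$ and $w'$ are cohomologous, which by equation~\eqref{coborde Lie} means precisely $w - w' \in \Ima(d^1) = B^2_{\alpha}(L, M)$. Hence $\Psi$ is constant on cohomology classes, so the induced map
\[
\overline{\Psi} \colon H^2_{\alpha}(L, M) = Z^2_{\alpha}(L, M)/B^2_{\alpha}(L, M) \longrightarrow \Ext_{\alpha}(L, M)
\]
is well defined; it is injective by the same biconditional and surjective because $\Psi$ is, hence a bijection. (One can also verify that $\overline{\Psi}$ is an isomorphism for the $\K$-vector space structure introduced on $\Ext_{\alpha}(L, M)$, although only the bijection is asserted.)

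For the description of the zero element, the zero of $H^2_{\alpha}(L, M)$ is the class of the cocycle $w = 0$; substituting $w = 0$ in the bracket of~\eqref{extension} gives $[(m, l), (m', l')] = (\alpha_L(l) \centerdot m' - \alpha_L(l') \centerdot m, [l, l'])$, which is exactly the bracket of the $\alpha$-semi-direct product $M \rtimes_{\alpha} L$, so $\overline{\Psi}(0)$ is the class of the trivial $\alpha$-extension as claimed. Since essentially all the content lies in the two previous propositions, I do not expect a substantial obstacle here; the only points needing routine care are matching ``cohomologous'' with membership in $B^2_{\alpha}(L, M)$ through~\eqref{coborde Lie}, and checking that $M \oplus_{0} L$ coincides on the nose with $M \rtimes_{\alpha} L$.
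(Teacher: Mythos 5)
Your proposal is correct and follows exactly the route the paper intends: the theorem is stated without an explicit proof precisely because it is the immediate assembly of the two preceding propositions (surjectivity of $w \mapsto [(M\oplus_{w}L,\widetilde{\alpha})]$ from the first, well-definedness and injectivity on cohomology classes from the second), together with the observation that $w=0$ yields $M \rtimes_{\alpha} L$. Your verification of these points, including the identification of ``cohomologous'' with membership in $B^{2}_{\alpha}(L,M)$, matches the paper's setup, so there is nothing to add.
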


\begin{corollary}
	Let $\xymatrix@C=1em{0 \ar[r] & (N, \alpha_{N}) \ar[rr]^{\xi} && (E, \alpha_{E}) \ar@<-0.5ex>[rr]_{\pi} && (L, \alpha_{L}) \ar@<-0.5ex>[ll]_-{\sigma} \ar[r] & 0 }$ be a short exact sequence of Hom-Lie algebras where $\sigma$ is a Hom-linear section of $\pi$. To every Hom-$L$-module $(A, \alpha_A)$ it corresponds a five-term natural exact sequence of Hom-vector spaces:
	\begin{equation*}
	\begin{tikzpicture}[baseline=(current  bounding  box.center)]
	\matrix(m) [matrix of nodes,row sep=1em, column sep=2em, text height=2.8ex, text depth=1.5ex]
	{
		$0$  & $\Der_{\alpha}(L, A)$  & $\Der_{\alpha}(E, A)$ & \mbox{}  \\ 
		\mbox{}	& $\Hom(N^{\ab}, A)$  & $H^2_{\alpha} (L, A)$ & $H^2_{\alpha} (E, A)$ \\};
	\path[overlay,->, font=\scriptsize,>=angle 90]	
	(m-1-1) edge (m-1-2)
	(m-1-2) edge node[above] {$\Der(\pi)$} (m-1-3)
	(m-1-3) edge [out=355,in=175] node[pos=0.5,sloped,above] {$\zeta$} 	(m-2-2)
	(m-2-2) edge node[above] {$\vartheta^{*}$} (m-2-3)
	(m-2-3) edge node[above] {$\pi^{*}$}(m-2-4)
	;
	\end{tikzpicture}
	\end{equation*}
\end{corollary}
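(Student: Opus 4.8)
The plan is to obtain this statement by specialising the first five-term exact sequence (the Theorem whose last three terms are $\Hom(N^{\ab},A)$, $\Ext_s(L,A)$ and $\Ext_s(E,A)$) to the case $s=\alpha$, and then transporting the two $\Ext_\alpha$-terms into $H^2_\alpha$-terms by means of the bijection $\Ext_\alpha(L,M)\cong H^2_\alpha(L,M)$ of the preceding Theorem.

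First I would observe that $s=\alpha_L$ is the multiple $\alpha_L^{1}$ of $\alpha_L$, so the five-term Theorem applies to the given short exact sequence and to the Hom-$L$-module $(A,\alpha_A)$, producing the natural exact sequence
\[
0 \to \Der_\alpha(L,A) \to \Der_\alpha(E,A) \xrightarrow{\zeta} \Hom(N^{\ab},A) \xrightarrow{\vartheta^{*}} \Ext_\alpha(L,A) \xrightarrow{\pi^{*}} \Ext_\alpha(E,A),
\]
where $A$ is regarded as a Hom-$E$-module through $\pi$. The Theorem $\Ext_\alpha(-,A)\cong H^2_\alpha(-,A)$, applied to the Hom-Lie algebra $(L,\alpha_L)$ and to $(E,\alpha_E)$, replaces the last two terms by $H^2_\alpha(L,A)$ and $H^2_\alpha(E,A)$. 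I would also record that this bijection is a linear isomorphism for the vector-space structure on $\Ext_\alpha$ introduced earlier: in the explicit description where the class of $(M\oplus_w L,\widetilde\alpha)$ corresponds to the class of the $2$-cocycle $w$, the Baer-type sum $(E)+(E')$ corresponds to the sum of cocycles and the scalar action $k\cdot(E)={}^{k_M}E$ to the cocycle $kw$, so the transported sequence is again an exact sequence of Hom-vector spaces.

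It then remains to check that the isomorphisms $\Ext_\alpha\cong H^2_\alpha$ are compatible with $\vartheta^{*}$ and $\pi^{*}$, which is the only non-formal point. For $\pi^{*}$ (backward induction along $\pi$) I would verify on representatives that the pullback of $(M\oplus_w L,\widetilde\alpha)$ along $\pi$ is equivalent to $\big(M\oplus_{w\circ(\pi\wedge\pi)} E,\widetilde\alpha\big)$, so that $\pi^{*}$ corresponds to the map $H^2_\alpha(L,A)\to H^2_\alpha(E,A)$ induced in cohomology by the homomorphism $\pi$. For $\vartheta^{*}$, the class of $\ab(E)$ in $\Ext_\alpha(L,N^{\ab})$ corresponds under the bijection to the class of the $2$-cocycle $\overline{w}_{E}$, where $\overline{w}_{E}(x_1\wedge x_2)$ is the class in $N^{\ab}$ of $\xi^{-1}\big([\sigma(x_1),\sigma(x_2)]-\sigma[x_1,x_2]\big)$, and push-forward along $f\colon N^{\ab}\to A$ corresponds to post-composition of cocycles with $f$; therefore $\vartheta^{*}(f)$ corresponds to $[f\circ\overline{w}_{E}]\in H^2_\alpha(L,A)$ and the relevant square commutes. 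Exactness, and naturality in the short exact sequence and in $A$, are inherited verbatim from the five-term Theorem.

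The main — and essentially the only — obstacle is the compatibility check of the previous paragraph: it is a routine transport of structure through the explicit cocycle/extension correspondence established in the two Propositions that precede the $\Ext_\alpha\cong H^2_\alpha$ Theorem, involving only bookkeeping with sections and pullbacks and no conceptual difficulty.
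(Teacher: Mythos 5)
Your proposal is correct and is exactly the argument the paper intends: the corollary is stated without proof precisely because it follows by taking $s=\alpha$ in the five-term theorem and rewriting the two $\Ext_{\alpha}$-terms via the bijection $\Ext_{\alpha}(-,A)\cong H^{2}_{\alpha}(-,A)$ of the preceding theorem. Your additional verification that the bijection is compatible with $\vartheta^{*}$ and $\pi^{*}$ (and with the vector-space structure) is the routine bookkeeping the paper leaves implicit.
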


\begin{theorem}
	Let $(F, \alpha_F)$ be a free Hom-Lie algebra (Section~\ref{free}). Then, for any Hom-$F$-module $(M, \alpha_{M})$, we have that  ${H_{\alpha}^2(F, M) = 0}$.
\end{theorem}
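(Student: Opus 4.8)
The plan is to invoke the bijection $\Ext_{\alpha}(F,M)\cong H_{\alpha}^{2}(F,M)$ obtained just above, so that it suffices to prove that every abelian $\alpha$-extension of $(F,\alpha_F)$ by a Hom-$F$-module $(M,\alpha_M)$ is equivalent to the trivial $\alpha$-extension. So fix such an extension
\[
\xymatrix@C=1em{0 \ar[r] & (M,\alpha_M) \ar[rr]^{i} && (E,\alpha_E) \ar@<-.5ex>[rr]_{\pi} && (F,\alpha_F) \ar[r] \ar@<-.5ex>[ll]_{\sigma} & 0,}
\]
and write $(F,\alpha_F)=\mathcal{F}_{r}(X,\alpha_X)$ for a Hom-set $(X,\alpha_X)$, with $\iota\colon(X,\alpha_X)\to(F,\alpha_F)$ the canonical inclusion of generators, which is a morphism of Hom-sets.

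The crux is to upgrade the merely Hom-linear section $\sigma$ to a section that is a \emph{homomorphism of Hom-Lie algebras}. First I would set $g:=\sigma\circ\iota\colon X\to E$; since $\iota$ and $\sigma$ each commute with the respective twisting maps, we get $g\circ\alpha_X=\alpha_E\circ g$, so $g$ is a morphism of Hom-sets $(X,\alpha_X)\to(E,\alpha_E)$. By the universal property of the free Hom-Lie algebra (Theorem~\ref{Hom-Lie libre}) there is a unique homomorphism of Hom-Lie algebras $G\colon(F,\alpha_F)\to(E,\alpha_E)$ extending $g$. Now $\pi\circ G$ and $\Id_F$ are both homomorphisms of Hom-Lie algebras $F\to F$ whose composites with $\iota$ equal $\iota$ (indeed $\pi\circ g=\pi\circ\sigma\circ\iota=\iota$), so the uniqueness clause of Theorem~\ref{Hom-Lie libre} forces $\pi\circ G=\Id_F$. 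Hence $G$ is a homomorphic section of $\pi$.

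To finish, I would feed $G$ into the construction of the $2$-cocycle representing the class of $(E)$. By the Proposition above, that class is represented by $M\oplus_w F$ with $w(x\wedge y)=i^{-1}\big([\sigma'(x),\sigma'(y)]-\sigma'[x,y]\big)$ computed with any Hom-linear section $\sigma'$ of $\pi$; taking $\sigma'=G$ (legitimate, as $G$ is in particular Hom-linear) and using that $G$ is a homomorphism gives $[G(x),G(y)]-G[x,y]=G[x,y]-G[x,y]=0$, so $w=0$. Thus $(E)$ is equivalent to $M\oplus_0 F=M\rtimes_{\alpha}F$, the trivial $\alpha$-extension, hence represents $0$. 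Therefore $\Ext_{\alpha}(F,M)=0$ and $H_{\alpha}^{2}(F,M)=0$. The argument goes through verbatim for a free Hom-Lie algebra over a set, using only the corresponding universal property.

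I do not expect a genuine obstacle: the content is concentrated in the observation that $\sigma|_X$ is automatically a morphism of Hom-sets and therefore extends to a homomorphic section $G$, after which everything is bookkeeping. The one place to stay alert is the identification $M\oplus_0 F=M\rtimes_{\alpha}F$: one must recall that the $F$-action on $M$ read off from an abelian $\alpha$-extension is independent of the chosen section and equals the prescribed $\alpha$-action, so that the vanishing of $w$ yields the \emph{trivial} $\alpha$-extension and not a differently twisted one.
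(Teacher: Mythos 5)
Your proof is correct and follows essentially the same route as the paper: the universal property of the free Hom-Lie algebra, applied to the restriction of the Hom-linear section $\sigma$ to the generating Hom-set, yields a Hom-Lie algebra homomorphism splitting $\pi$, whence every abelian $\alpha$-extension of $(F,\alpha_F)$ is equivalent to the trivial one and $H^2_{\alpha}(F,M)=0$. The paper's own argument is only a brief sketch of this idea; your version supplies the details (the Hom-set morphism property of $\sigma\circ\iota$, the uniqueness argument giving $\pi\circ G=\Id_F$, and the vanishing of the cocycle $w$) that the paper leaves implicit.
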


\begin{proof}
	Let $(E)\colon \xymatrix@C=0.87em{0 \ar[r] & (M,\alpha_M) \ar[rr]^{i} && (E,\alpha_E) \ar@<-.5ex>[rr]_{\pi} && (F,\alpha_F) \ar[r] \ar@<-.5ex>[ll]_{\sigma} & 0,}$ be an $\alpha$-extension of $(F, \alpha_F)$. Then, by the properties of free Hom-Lie algebras, there exists a homomorphism from $(F,\alpha_F)$ to $(M \rtimes_{\alpha} F, \widetilde{\alpha})$, inducing a homomorphism from $(E,\alpha_E)$ to $(M \rtimes_{\alpha} F, \widetilde{\alpha})$. Therefore, every $\alpha$-extension is equivalent to the trivial one.
\end{proof}

\section{Crossed modules}\label{S:crossed}
Crossed modules of Hom-Lie algebras were introduced in \cite{ShCh} in order to prove the existence of a
one-to-one correspondence between strict Hom-Lie 2-algebras and crossed modules of Hom-Lie algebras. In this section we analyse some categorical properties of crossed modules of Hom-Lie algebras.

\begin{definition} \label{def cm}
A \emph{crossed module} of Hom-Lie algebras is a triple of the form $((M, \alpha_M),(L, \alpha_L),\mu)$,
where $(M, \alpha_M)$ and $(L, \alpha_L)$ are Hom-Lie algebras together with a Hom-action from $(L, \alpha_L)$ over $(M, \alpha_M)$ and  a Hom-Lie algebra homomorphism $\mu \colon (M, \alpha_M) \to (L, \alpha_L)$ such that the following identities hold:
\begin{enumerate}
\item[(a)] $\mu(l \centerdot m) = [l, \mu(m)]$,
\item[(b)] $\mu(m) \centerdot m' = [m,m']$,
\end{enumerate}
for all $m \in M, l \in L$.
\end{definition}

\begin{remark} \label{cm properties}
The subalgebra $\Ima (\mu)$ in this case it is also a Hom-ideal of $(L,\alpha_L)$ and that $\Ker (\mu)$ is contained in the centre of $(M,\alpha_M)$. Moreover $(\Ker (\mu),\alpha_{M \mid})$ is a Hom-$Coker (\mu)$-module.
\end{remark}

\begin{example}\label{Ex CM} \
\begin{enumerate}
\item[(a)] Let $(H,\alpha_H)$ be a Hom-ideal of a Hom-Lie algebra $(L,\alpha_L)$. Then the triple $((H,\alpha_H),(L,\alpha_L), inc)$ is a crossed module where the action is given in Example \ref{ex action}~(b). There are two particular cases which allow us to think a Hom-Lie algebra as a crossed module, namely $(H,\alpha_H)=(L,\alpha_L)$ and $(H,\alpha_H)=(0,0)$. So $((L,\alpha_L),(L,\alpha_L),\Id)$ and $((0,0),(L,\alpha_L),0)$ are crossed modules.
   \item[(b)] Let $(L,\alpha_L)$ be a Hom-Lie algebra and $(M,\alpha_M)$ be a Hom-L-module. Then $((M,\alpha_M),(L,\alpha_L),0)$ is a crossed module.
\end{enumerate}
\end{example}

\begin{proposition}
Let $(M,\alpha_M)$ and $(L,\alpha_L)$ be Hom-Lie algebras together with a Hom-action of $(L,\alpha_L)$ on $(M,\alpha_M)$ and $\mu \colon (M,\alpha_M) \to (L,\alpha_L)$. The following statements are equivalent:
\begin{enumerate}
\item[(a)] $((M,\alpha_M), (L,\alpha_L), \mu)$ is a crossed module of Hom-Lie algebras.
\item[(b)] The $\K$-linear maps $(\mu,1) \colon (M,\alpha_M) \rtimes (L,\alpha_L) \to (L,\alpha_L) \rtimes (L,\alpha_L)$ and $(1,\mu) \colon (M,\alpha_M) \rtimes (M,\alpha_M) \to (M,\alpha_M) \rtimes (L,\alpha_L)$ are homomorphisms of Hom-Lie algebras.
\end{enumerate}
\end{proposition}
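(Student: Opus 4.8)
The plan is to turn each implication into the same bookkeeping of bilinear identities obtained by expanding brackets in the semidirect products of Definition~\ref{D:semidirect}. Recall that a homomorphism of Hom-Lie algebras is a $\K$-linear map preserving the bracket and commuting with the structure endomorphisms, and that $(\mu,1)$ and $(1,\mu)$ are $\K$-linear by construction. Since the endomorphism of a semidirect product is the diagonal one, the map $(\mu,1)\colon (M\rtimes L,\widetilde{\alpha})\to(L\rtimes L,\widetilde{\alpha})$ commutes with $\widetilde{\alpha}$ exactly when $\mu\circ\alpha_M=\alpha_L\circ\mu$, and the same equivariance condition is what is needed for $(1,\mu)$. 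Here $L\rtimes L$ and $M\rtimes M$ are formed with the adjoint actions $x\centerdot y=[x,y]$, which are genuine Hom-actions: axioms (a) and (b) of a Hom-action are instances of~\eqref{Jacobi}, and axiom (c) is multiplicativity. Thus only bracket preservation remains to be examined, and $\mu$ will automatically be forced to be Hom-equivariant.

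First I would expand, for $(\mu,1)$, both sides of the homomorphism condition on a pair $\big((m_1,l_1),(m_2,l_2)\big)$. The $L$-components always agree, while the $M$-components agree if and only if
\[
\mu[m_1,m_2]+\mu(l_1\centerdot m_2)-\mu(l_2\centerdot m_1)=[\mu(m_1),\mu(m_2)]+[l_1,\mu(m_2)]-[l_2,\mu(m_1)].
\]
By $\K$-bilinearity this family of identities is equivalent to the conjunction of the two special cases $l_1=l_2=0$ and $m_1=l_2=0$, which read $\mu[m_1,m_2]=[\mu(m_1),\mu(m_2)]$ and $\mu(l\centerdot m)=[l,\mu(m)]$; that is, $\mu$ preserves brackets and axiom (a) of Definition~\ref{def cm} holds. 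Hence $(\mu,1)$ is a homomorphism of Hom-Lie algebras if and only if $\mu$ is a homomorphism of Hom-Lie algebras satisfying crossed-module axiom (a).

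Next I would run the analogous computation for $(1,\mu)\colon (M\rtimes M,\widetilde{\alpha})\to(M\rtimes L,\widetilde{\alpha})$, using that on $M\rtimes M$ the action is the adjoint one. Comparing the images of $\big((m_1,n_1),(m_2,n_2)\big)$, the second components agree if and only if $\mu[n_1,n_2]=[\mu(n_1),\mu(n_2)]$, and the $M$-components agree if and only if $[n_1,m_2]-[n_2,m_1]=\mu(n_1)\centerdot m_2-\mu(n_2)\centerdot m_1$; by bilinearity the latter reduces to $[n,m]=\mu(n)\centerdot m$, which is axiom (b) of Definition~\ref{def cm}. So $(1,\mu)$ is a homomorphism of Hom-Lie algebras if and only if $\mu$ is a homomorphism of Hom-Lie algebras satisfying crossed-module axiom (b).

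Combining the two analyses, condition (b) of the proposition holds if and only if $\mu\circ\alpha_M=\alpha_L\circ\mu$, $\mu$ preserves brackets, and both axioms (a) and (b) of Definition~\ref{def cm} hold, which is precisely the statement that $((M,\alpha_M),(L,\alpha_L),\mu)$ is a crossed module, namely (a). I do not expect a serious obstacle: the only points needing care are verifying that the adjoint action makes $L\rtimes L$ and $M\rtimes M$ into Hom-Lie algebras (so that ``homomorphism'' is meaningful) and being scrupulous about which action occurs in each factor when brackets are expanded. Everything else is a direct transcription of the classical Lie-algebra argument, the Hom-data contributing only the harmless equivariance $\mu\circ\alpha_M=\alpha_L\circ\mu$.
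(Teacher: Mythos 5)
Your argument is correct: expanding the brackets of the semidirect products (with the adjoint actions on $L\rtimes L$ and $M\rtimes M$, which are indeed Hom-actions by the Hom-Jacobi identity and multiplicativity) and specialising the resulting bilinear identities yields precisely bracket preservation of $\mu$, its compatibility with $\alpha_M$, $\alpha_L$, and the two crossed-module axioms of Definition~\ref{def cm}. The paper states this proposition without proof, treating it as a routine verification, and your computation is exactly the standard argument the authors leave implicit.
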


\begin{definition} \label{homo cm}
Let $((M,\alpha_M),(L,\alpha_L),\mu)$ and $((M',\alpha_{M'}),(L',\alpha_{L'}),\mu')$ be crossed modules of Hom-Lie algebras. A \emph{homomorphism of crossed modules} is a pair of Hom-Lie algebra homomorphisms $f \colon (M,\alpha_M) \to (M',\alpha_{M'})$ and $\phi \colon (L,\alpha_L) \to (L',\alpha_{L'})$ such that:
\begin{enumerate}
\item[(a)] $\phi \centerdot \mu = \mu' \centerdot f$,
\item[(b)] $f(l \centerdot m) = \phi(l) \centerdot f(m)$,
\end{enumerate}
for all $m \in M, l \in L$.
\end{definition}

We will denote the category of Hom-Lie crossed modules by $\Xhomlie$. It is easy to check that this definition coincides with the general notion of crossed modules in a semi-abelian category introduced by Janelidze in~\cite{Jan}

\subsection{Equivalence with cat\texorpdfstring{$^1$}{1}-Hom-Lie algebras and crossed modules} \hfill

In this subsection we stablish the relationships between crossed modules, cat$^1$-Hom-Lie algebras and internal categories. The equivalence between crossed modules and internal categories is known for categories of groups with operations~\cite{Po} and more generally, in semi-abelian categories~\cite{Jan}. Let us note that the following definition of cat$^1$-Hom-Lie algebras is given in complete analogy with Loday's original notion of cat$^1$-groups~\cite{Lod-spaces}.

\begin{definition}
	A \emph{cat$^1$-Hom-Lie algebra} $((P, \alpha_P), (N, \alpha_N), s, t)$ consists of a Hom-Lie algebra $(P, \alpha_P)$ together with a Hom-Lie subalgebra $(N, \alpha_N)$ and two homomorphisms of Hom-Lie algebras $s, t \colon (P, \alpha_P) \to (N, \alpha_N)$ satisfying the following conditions:
	\begin{enumerate}
		\item[(a)] $s_{\mid N} = t_{\mid N} = id_N$,
		\item[(b)] $[\Ker(s), \Ker(t)]=0$.
	\end{enumerate}
	If $((P', \alpha_{P'}), (N', \alpha_{N'}),s', t')$ is another cat$^1$-Hom-Lie algebra, a \emph{morphism} of cat$^1$-Hom-Lie algebras is a homomorphism of Hom-Lie algebras $f \colon (P, \alpha_P) \to (P', \alpha_{P'})$ such that $f(N) \subseteq N'$, $s' \circ f = f \circ s$ and $t' \circ f = f \circ t$. We denote the corresponding category by $\cathomlie$.
\end{definition}

We can associate to a cat$^1$-Hom-Lie algebra a crossed module as follows. Given $((P, \alpha_P), (N, \alpha_N), s, t)$, we set  $t_{\mid \Ker(s)} \colon (\Ker (s), \alpha_{\Ker (s)}) \to (N, \alpha_N)$ where the action of $N$ on $\Ker (s)$ is given by the bracket (see Example \ref{ex action}~(b)). We claim that it is a crossed module and moreover, we have the following result:

\begin{lemma}
	The above assignment defines a functor $\mathcal{P} \colon \cathomlie \to \Xhomlie$.
\end{lemma}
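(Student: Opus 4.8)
The plan is to verify that the assignment described just before the statement is well-defined on objects (i.e.\ really produces a crossed module) and then on morphisms, and that it respects composition and identities. I would organize the proof in two main parts: first, the object-level claim, and second, functoriality on morphisms.

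\textbf{Object level.} Given a cat$^1$-Hom-Lie algebra $((P,\alpha_P),(N,\alpha_N),s,t)$, write $(K,\alpha_K) = (\Ker(s),\alpha_{P\mid})$. Since $s$ is a homomorphism of Hom-Lie algebras, $K$ is a Hom-ideal of $P$, so it is $\alpha_P$-invariant and closed under the bracket; in particular $\alpha_K$ is well-defined and $K$ is a Hom-Lie algebra. The map $\mu := t_{\mid K}\colon (K,\alpha_K) \to (N,\alpha_N)$ is a homomorphism of Hom-Lie algebras as a restriction of $t$, and its image lies in $N$ by definition of cat$^1$. The action of $N$ on $K$ is the restriction of the bracket of $P$ (Example~\ref{ex action}~(b)): for $x\in N\subseteq P$ and $k\in K$, set $x\centerdot k = [x,k]$; this lands in $K$ because $K$ is a Hom-ideal, and it satisfies the Hom-action axioms (a)--(c) because they are just instances of the skew-symmetry, Hom-Jacobi identity and multiplicativity in $P$. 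It remains to check the two crossed-module identities of Definition~\ref{def cm}. For (a), $\mu(x\centerdot k) = t([x,k]) = [t(x),t(k)] = [x,\mu(k)]$, using that $t$ is a homomorphism and $t_{\mid N} = \Id_N$. For (b) one needs $\mu(k)\centerdot k' = [k,k']$ for $k,k'\in K$, i.e.\ $[t(k),k'] = [k,k']$; equivalently $[t(k)-k,\,k']=0$. Here is where condition (b) of cat$^1$ enters: $t(k)-k$ lies in $\Ker(t)$ (since $t(t(k)-k) = t(k)-t(k) = 0$, as $t_{\mid N}=\Id_N$ and $t(k)\in N$), while $k'\in K = \Ker(s)$, so $[\Ker(t),\Ker(s)] = [\Ker(s),\Ker(t)] = 0$ gives $[t(k)-k,k']=0$. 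Hence (b) holds and we have a crossed module.

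\textbf{Morphism level and functoriality.} Let $f\colon (P,\alpha_P)\to(P',\alpha_{P'})$ be a morphism of cat$^1$-Hom-Lie algebras, so $f(N)\subseteq N'$, $s'f = fs$, $t'f = ft$. From $s'f = fs$ we get $f(\Ker(s))\subseteq \Ker(s')$, so $f$ restricts to a homomorphism of Hom-Lie algebras $f_{\mid}\colon (K,\alpha_K)\to(K',\alpha_{K'})$; together with $f_{\mid N}\colon (N,\alpha_N)\to(N',\alpha_{N'})$ this is the candidate pair $\mathcal{P}(f)$. One checks Definition~\ref{homo cm}: identity (a) is $f_{\mid N}\circ\mu = \mu'\circ f_{\mid}$, which follows from $t'f = ft$ restricted to $K$; identity (b), $f_{\mid}(x\centerdot k) = f_{\mid N}(x)\centerdot f_{\mid}(k)$, is $f([x,k]) = [f(x),f(k)]$, immediate since $f$ preserves the bracket. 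Finally, $\mathcal{P}$ preserves identities and composition because restriction of maps does, so $\mathcal{P}$ is a functor $\cathomlie \to \Xhomlie$.

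\textbf{Main obstacle.} The only non-formal point is the verification of crossed-module identity (b) on objects; everything else is a routine check that restrictions of homomorphisms remain homomorphisms and that the bracket-action axioms are inherited. Accordingly I would present (b) in full detail (making explicit that the combination of $t_{\mid N}=\Id_N$ with $[\Ker(s),\Ker(t)]=0$ is exactly what is needed) and treat the remaining verifications telegraphically.
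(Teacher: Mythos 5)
Your proof is correct and follows essentially the same route as the paper's: verify the two crossed-module identities for $t_{\mid \Ker(s)}$, with identity (b) reduced to $[t(p)-p,\,p']=0$ via $t(p)-p\in\Ker(t)$, $p'\in\Ker(s)$ and the cat$^1$ condition $[\Ker(s),\Ker(t)]=0$, then treat the morphism level as a routine restriction argument. Your write-up is simply more explicit than the paper's (which leaves the $\alpha$-compatibility, action axioms and functoriality checks as immediate), but there is no difference in substance.
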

\begin{proof}
	It is immediate that $\alpha_N \circ t_{\mid P} = t_{\mid P} \circ \alpha_P$. Then,
	
	$t_{\mid P}(n \centerdot p) = t [n,p] =[t(n),t(p)] = [n, t_{\mid P}(p)]$,
	
	$t_{\mid P}(p) \centerdot p' = [t(p), p'] = [p,p']$ since $p' \in \Ker(s)$ and $t(p)-p \in \Ker(t)$.
	
	In addition, it is an easy calculation to check that if $f$ is a cat$^1$-Hom-Lie algebra morphism,  $\mathcal{P}(f) = (f_{\mid P}, f_{\mid N})$ is a homomorphism of crossed modules and $\mathcal{P}$ is a functor.
\end{proof}

\begin{proposition}
	There is a functor $\mathcal{S} \colon \Xhomlie \to \cathomlie$.
\end{proposition}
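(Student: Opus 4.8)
The plan is to build, from a crossed module $((M,\alpha_M),(L,\alpha_L),\mu)$, the cat$^1$-Hom-Lie algebra carried by the semi-direct product. Concretely, I would set $(P,\alpha_P)=(M\rtimes L,\widetilde{\alpha})$ (Definition~\ref{D:semidirect}); this makes sense because a crossed module comes equipped with a full Hom-action of $(L,\alpha_L)$ on $(M,\alpha_M)$. For the distinguished subalgebra I take $(N,\alpha_N)$ to be the image of the canonical section $l\mapsto(0,l)$, which is a Hom-Lie subalgebra of $(M\rtimes L,\widetilde{\alpha})$ isomorphic to $(L,\alpha_L)$ and invariant under $\widetilde{\alpha}$. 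The two structure maps $s,t\colon (M\rtimes L,\widetilde{\alpha})\to (N,\alpha_N)$ are $s(m,l)=(0,l)$ and $t(m,l)=(0,\mu(m)+l)$. The map $s$ is the projection $\pi$ followed by the section, hence a homomorphism; for $t$ one checks bracket-compatibility using that $\mu$ is a Hom-Lie homomorphism together with crossed-module axiom~(a), $\mu(l\centerdot m)=[l,\mu(m)]$, and one checks compatibility with the endomorphisms from $\mu\circ\alpha_M=\alpha_L\circ\mu$. Both $s$ and $t$ restrict to the identity on $N$ since $\mu(0)=0$, so condition~(a) of the definition of cat$^1$-Hom-Lie algebra holds.

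Next I would verify condition~(b), $[\Ker(s),\Ker(t)]=0$. Here $\Ker(s)=\{(m,0)\mid m\in M\}$ and $\Ker(t)=\{(m,-\mu(m))\mid m\in M\}$, and, computing in the semi-direct product,
\[
\bigl[(m,0),(m',-\mu(m'))\bigr] = \bigl([m,m']+\mu(m')\centerdot m,\; 0\bigr).
\]
By crossed-module axiom~(b) we have $\mu(m')\centerdot m=[m',m]$, so the first coordinate equals $[m,m']+[m',m]=0$ by skew-symmetry; hence $[\Ker(s),\Ker(t)]=0$. This is the only step where the crossed-module identities genuinely intervene, and it is the heart of the argument; everything else is bookkeeping with the semi-direct product bracket and the $\alpha$-twisting.

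On morphisms, a homomorphism of crossed modules $(f,\phi)$ (Definition~\ref{homo cm}) is sent to the $\mathbb{K}$-linear map $\mathcal{S}(f,\phi)\colon M\rtimes L\to M'\rtimes L'$, $(m,l)\mapsto(f(m),\phi(l))$. That this is a homomorphism of Hom-Lie algebras follows from $f$ and $\phi$ being homomorphisms together with the equivariance $f(l\centerdot m)=\phi(l)\centerdot f(m)$; it maps $N$ into $N'$, commutes with the projections $s,s'$ trivially, and commutes with $t,t'$ precisely because $\phi\circ\mu=\mu'\circ f$. Functoriality — $\mathcal{S}(\Id)=\Id$ and $\mathcal{S}$ respects composition — is immediate, as the assignment on morphisms is componentwise.

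I expect no deep obstacle: the main work is simply organising the (somewhat lengthy) verification that $(M\rtimes L,\,L,\,s,\,t)$ satisfies both defining conditions and that $s,t$ are honest homomorphisms of Hom-Lie algebras, the $\alpha$-compatibility adding only routine extra checks. It is worth recording separately, as a lemma preceding or following this proposition, that $\mathcal{S}$ and the functor $\mathcal{P}$ of the previous lemma are mutually quasi-inverse, yielding an equivalence $\Xhomlie\simeq\cathomlie$; but for the present statement only the construction of $\mathcal{S}$ and its functoriality are needed.
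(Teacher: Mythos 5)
Your construction is exactly the one in the paper: form the semi-direct product $M\rtimes L$, take the copy of $L$ as the distinguished subalgebra, set $s(m,l)=(0,l)$ and $t(m,l)=(0,\mu(m)+l)$, verify that $t$ is a Hom-Lie homomorphism via axiom~(a) and $\mu\circ\alpha_M=\alpha_L\circ\mu$, check $[\Ker(s),\Ker(t)]=0$ via axiom~(b) and skew-symmetry, and define $\mathcal{S}$ on morphisms componentwise. This matches the paper's proof in both structure and the key verifications, so it is correct and essentially identical.
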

\begin{proof}
	Let $((N, \alpha_N),(P, \alpha_P),\mu)$ be a crossed module. We can build the assignment $\mathcal{S}((N, \alpha_N), (P, \alpha_P),\mu) = ((N \rtimes P, \widetilde{\alpha}), P, s, t)$, where $s(n,p)=p, t(n,p)=\mu(n)+p$. Now let us check that it is a cat$^1$-Hom-Lie algebra:
	\begin{align*}
		t[(n,p),(n',p')] &= t([n,n'] + p \centerdot n' - p' \centerdot n, [p,p']) \\
		{} &= \mu[n,n'] + \mu(p \centerdot n') - \mu(p' \centerdot n) + [p,p'] \\
		{} &= [\mu(n),\mu(n')] + [\mu(n),p'] + [p, \mu(n')] + [p,p'] \\
		{} &= [\mu(n)+p, \mu(n')+p'] = [t(n,p), t(n',p')], \\
		\\
		\alpha_P \circ s (n,p) &= \alpha_P(p)= s(\alpha_N(n), \alpha_P(p))= s \circ \widetilde{\alpha}(n,p), \\
		\\
		\alpha_P \circ t (n,p) &= \alpha_P(\mu(n)+p) = \mu(\alpha_N(n))+\alpha_P(p) = t(\alpha_N(n), \alpha_P(p)) \\
		{} &= t \circ \widetilde{\alpha}(n,p), \\
		\\
		[(n,0),(n',-\mu(n'))] &= ([n,n']+\mu(n') \centerdot n, 0) \\
		{} & = ([n,n']+ \mu(n')\centerdot n,0) = (0,0). \\
	\end{align*}
	Finally, if $(f_1,f_2) \colon ((N, \alpha_N),(P, \alpha_P),\mu) \to ((N', \alpha_{N'}),(P', \alpha_{P'}),\mu')$ is a homomorphism of crossed modules, then $\mathcal{S}(f_1, f_2)$ is given by $(f_1, f_2) \colon N \rtimes P \to N \rtimes P$, $(f_1,f_2)(n,p) = (f_1(n), f_2(p))$, and is easy to check that $\mathcal{S}$ is a functor.
\end{proof}

\begin{theorem}
	There is an equivalence of categories $\Xhomlie \cong \cathomlie$.
\end{theorem}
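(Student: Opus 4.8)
The plan is to establish the equivalence by showing that the two functors $\mathcal{P} \colon \cathomlie \to \Xhomlie$ and $\mathcal{S} \colon \Xhomlie \to \cathomlie$ constructed above are mutually quasi-inverse, i.e.\ that there are natural isomorphisms $\mathcal{P} \circ \mathcal{S} \cong \Id_{\Xhomlie}$ and $\mathcal{S} \circ \mathcal{P} \cong \Id_{\cathomlie}$.

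First I would compute $\mathcal{P} \circ \mathcal{S}$. Starting from a crossed module $((N,\alpha_N),(P,\alpha_P),\mu)$, the functor $\mathcal{S}$ produces the cat$^1$-Hom-Lie algebra $((N \rtimes P, \widetilde{\alpha}), P, s, t)$ with $s(n,p)=p$ and $t(n,p) = \mu(n)+p$. Applying $\mathcal{P}$ returns the restriction $t_{\mid \Ker(s)}$; since $\Ker(s) = \{(n,0) \mid n \in N\} \cong (N,\alpha_N)$ and $t(n,0) = \mu(n)$, this recovers $\mu \colon (N,\alpha_N) \to (P,\alpha_P)$ with the action of $P$ on $\Ker(s)$ given by $p \centerdot (n,0) = [(0,p),(n,0)] = (p \centerdot n, 0)$, which is exactly the original action. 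So the canonical isomorphism $(N,\alpha_N) \cong \Ker(s)$ assembles into a natural isomorphism of crossed modules; naturality in the homomorphism is a routine check using that $\mathcal{S}(f_1,f_2)$ acts componentwise.

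Next I would compute $\mathcal{S} \circ \mathcal{P}$, which is the less formal direction. Starting from a cat$^1$-Hom-Lie algebra $((P,\alpha_P),(N,\alpha_N),s,t)$, the functor $\mathcal{P}$ yields the crossed module $(\Ker(s), N, t_{\mid \Ker(s)})$, and then $\mathcal{S}$ produces the cat$^1$-Hom-Lie algebra $((\Ker(s) \rtimes N, \widetilde{\alpha}), N, s', t')$ with $s'(k,n) = n$, $t'(k,n) = t(k)+n$. The comparison morphism should be $\Theta \colon \Ker(s) \rtimes N \to P$, $\Theta(k,n) = k + \sigma(n)$ where $\sigma \colon N \hookrightarrow P$ is the subalgebra inclusion (so $s\sigma = t\sigma = \Id_N$). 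One checks $\Theta$ is a Hom-Lie algebra homomorphism using condition (b) $[\Ker(s),\Ker(t)]=0$: the bracket in $\Ker(s) \rtimes N$ involves $n \centerdot k = [\sigma(n),k]$, and one needs $[k + \sigma(n), k' + \sigma(n')] = [k,k'] + [\sigma(n),k'] + [k,\sigma(n')] + [\sigma(n),\sigma(n')]$ to match, which it does. The inverse map sends $p \in P$ to $(p - \sigma s(p),\, s(p))$, noting $p - \sigma s(p) \in \Ker(s)$; one verifies these are mutually inverse, intertwine $\widetilde{\alpha}$ with $\alpha_P$, respect the subalgebras $N$, and commute with the source and target maps ($s' $ corresponds to $s$ and $t'$ to $t$, the latter using $t = t_{\mid \Ker(s)} \circ (\text{proj}) + s$ in a suitable sense). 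Naturality in cat$^1$-morphisms is then a componentwise verification.

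The main obstacle I anticipate is the $\mathcal{S} \circ \mathcal{P}$ direction: one must verify carefully that $\Theta$ is well defined and bijective, that it is an algebra homomorphism (this is precisely where the condition $[\Ker(s),\Ker(t)]=0$ is used, by writing $t(p) = s(p) + (t(p)-s(p))$ with $t(p)-s(p) \in \Ker(s)$... wait, rather $p - \sigma s(p) \in \Ker(s)$ and the decomposition $P = \Ker(s) \oplus \sigma(N)$ as vector spaces must be checked), and that $\Theta$ correctly translates the target map $t'$ back to $t$. Once $\Theta$ is in hand, everything else is bookkeeping. I would therefore present the construction of $\Theta$ and its inverse explicitly, check the homomorphism property on brackets highlighting the use of axiom (b), and leave the remaining compatibilities and naturality to the reader as routine.
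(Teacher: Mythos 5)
Your proposal is correct and follows essentially the same route as the paper, which simply asserts that $\mathcal{P}$ and $\mathcal{S}$ are mutually inverse; your construction of the comparison isomorphism $\Theta(k,n)=k+\sigma(n)$ with inverse $p\mapsto(p-\sigma s(p),\,s(p))$ fills in exactly the details the paper leaves to the reader. One small remark: the bracket-preservation of $\Theta$ follows from bilinearity alone (the semidirect-product bracket built from the conjugation action reproduces the bracket of $P$), the axiom $[\Ker(s),\Ker(t)]=0$ having already been spent in showing that $\mathcal{P}$ lands in crossed modules.
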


\begin{proof}
	It is easy to see that $\mathcal{P}$ and $\mathcal{S}$ are inverse of each other.
\end{proof}

\section{\texorpdfstring{$\alpha$}{Alpha}-Crossed extensions and third cohomology}\label{S:third_cohomology}

The goal of this section is to relate the third cohomology of Hom-Lie algebras by means of a particular kind of crossed modules.
Nevertheless, the standard notion of crossed modules needs to be modified to be coherent.

\begin{definition} 
	An \emph{$\alpha$-crossed module} of Hom-Lie algebras is a triple of the form $((M, \alpha_M),(L, \alpha_L),\mu)$,
	where $(M, \alpha_M)$ and $(L, \alpha_L)$ are Hom-Lie algebras together with a Hom-action from $(L, \alpha_L)$ over $(M, \alpha_M)$ and a Hom-Lie algebra homomorphism $\mu \colon (M, \alpha_M) \to (L, \alpha_L)$ such that the following identities hold:
	\begin{enumerate}
		\item[(a)] $\mu(\alpha_L(l) \centerdot m) = [l, \mu(m)]$,
		\item[(b)] $\mu(\alpha(m)) \centerdot m' = [m,m']$,
	\end{enumerate}
	for all $m \in M, l \in L$.
\end{definition}

\begin{remark} \label{alfa cm}
	Obviously, an  $\alpha$-crossed module is a crossed module when $\alpha_L = Id_L$ and $\alpha_M = Id_M$, but in general this is not true as the following example shows:
	consider the two-dimensional abelian Hom-Lie algebra $(L, \alpha_L)$, where $L=\langle \{a_1,a_2\}\rangle$ and  the bracket and endomorphism $\alpha_L$ are trivial; the three-dimensional Hom-Lie algebra $(M,\alpha_M)$, where $M = \langle \{b_1,b_2,b_3 \}\rangle$, and the bracket and $\alpha_M=0$; the Hom-Lie action from $(L,\alpha_L)$ over $(M,\alpha_M)$ given by $a_1 \centerdot b_2 = - a_2 \centerdot b_1 =  b_2$ and the homomorphism of Hom-Lie algebras $\mu\colon (M,\alpha_M) \to (L,\alpha_L)$ given by $\mu(b_1)=a_1$, $\mu(b_2) = a_2$, $\mu(b_3)=0$. Then $((M,\alpha_M), (L,\alpha_L), \mu)$ is an $\alpha$-crossed module which is not a crossed module.
	
	Moreover, consider a Hom-ideal $(M, \alpha_M)$ of a Hom-Lie algebra $(L,\alpha_L)$. According to Example~\ref{Ex CM}(a), $((M, \alpha_M), (L,\alpha_L), inc)$ is a crossed module which in general is not an $\alpha$-crossed module
	
	On the other hand, $\alpha$-crossed modules satisfy the properties given in Remark~\ref{cm properties} for crossed modules and a homomorphism of $\alpha$-crossed modules is given in Definition~\ref{homo cm} where crossed module must be  changed by $\alpha$-crossed module.
\end{remark}

\begin{example}\
	\begin{enumerate}
		\item[(a)] Let $(M,\alpha_M)$ be a Hom-ideal of a Hom-Lie algebra $(L,\alpha_L)$ satisfying that ${l-\alpha_L(l)\in Z(M)}$, for all $l \in L$. Then  $((M,\alpha_M),(L,\alpha_L),inc)$ is an $\alpha$-crossed module where the action is given in Example \ref{ex action}~(b).
		
		\item[(b)]  A Hom-Lie algebra can be seen as an $\alpha$-crossed module by $((0,0),(L, \alpha_L),0)$.
		
		\item[(c)]  Let $(L,[-,-]_L,\alpha_L)$ be a Hom-Lie algebra and $(M,\alpha_M)$ be a Hom-L-module. Then $((M,\alpha_M),(L,\alpha_L),0)$ is an $\alpha$-crossed module.
	\end{enumerate}
\end{example}

\begin{definition} \label{crossed extension}
	Let $(L,\alpha_L)$ be a Hom-Lie algebra and and $(M,\alpha_M)$ a Hom-$L$-module. An \emph{$\alpha$-crossed extension} of
	$(L,\alpha_L)$ by $(M,\alpha_M)$ is an exact sequence of Hom-Lie algebras
	\[
	\xymatrix{
		0 \ar[r] & (M,\alpha_M) \ar[r]^-{\chi} & (N,\alpha_N) \ar[r]^-{\mu} & (P,\alpha_P) \ar@<-.5ex>[r]_-{\pi} & (L,\alpha_L) \ar@<-.5ex>[l]_-{\sigma} \ar[r] & 0
	}
	\]
	such that $\mu\colon (N,\alpha_N) \to (P,\alpha_P)$ is an $\alpha$-crossed module, $\sigma$ is a Hom-linear section of $\pi$ such that the Hom-$L$-module structure coincides with the induced structure of Hom-$L$-module ($l \centerdot m = \sigma(l) \centerdot \chi(m)$, for any $l \in L, m \in M$) on $(M,\alpha_M)$, and $\mu_{\mid} \colon N \to \Ima (\mu)$ has at least one linear Hom-section $\rho \colon (\Ima (\mu), \alpha_{P \mid}) \to (N, \alpha_{N})$.
\end{definition}


\begin{definition} \label{def hom}
	A homomorphism between two $\alpha$-crossed extensions is a homomorphism of crossed modules $(\varphi, \phi) \colon ((N,\alpha_N), (P,\alpha_P), \mu) \to ((N',\alpha_{N'}), (P',\alpha_{P'}),\mu')$ such that the following diagram is commutative:
	\[
	\xymatrix{
		0 \ar[r] & (M,\alpha_M) \ar@{=}[d] \ar[r]^{\chi} & (N,\alpha_N) \ar[d]_{\varphi}
		\ar[r]^{\mu} &  (P,\alpha_P) \ar[d]_{\phi} \ar@<-.5ex>[r]_-{\pi} & (L,\alpha_L) \ar@<-.5ex>[l]_-{\sigma} \ar@{=}[d] \ar[r] &
		0 \\
		0 \ar[r] & (M,\alpha_M) \ar[r]^{\chi'} & (N',\alpha_{N'}) \ar[r]^{\mu'} &  (P,\alpha_P) \ar@<-.5ex>[r]_-{\pi'} & (L,\alpha_L) \ar@<-.5ex>[l]_-{\sigma'}
		\ar[r] & 0  }
	\]
\end{definition}

\begin{definition}
	Given two $\alpha$-crossed extensions of $(L,\alpha_L)$ by $(M,\alpha_M)$ as above, they are said to be
	\emph{elementary equivalent} if there is a homomorphism from one to the other.
	In the set of all $\alpha$-crossed extensions we consider the
	equivalence relation generated by the elementary equivalence. Let
	$\Ext^2_{\alpha}(L,M)$ denote the set of equivalence
	classes of crossed extensions of the Hom-Lie algebra
	$(L,\alpha_L)$ by the Hom-$L$-module $(M,\alpha_M)$.
\end{definition}
As it happens in the abelian $s$-extensions case, the choice of the sections does not play any role.

\begin{theorem}\label{main th}
	For any Hom-Lie algebra $(L,\alpha_L)$ and a Hom-$L$-module $(M,\alpha_M)$ there is a canonical assignment:
	\[
	\eta\colon \Ext^2_{\alpha}(L,M) \to H^3_{\alpha}(L,M).
	\]
\end{theorem}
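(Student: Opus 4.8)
The plan is to imitate, at the level of explicit cochains, the classical construction assigning a third cohomology class to a $2$-extension; the care required with the twisting maps is precisely what makes $\alpha$-crossed modules, rather than ordinary crossed modules, the right notion here. First I would fix an $\alpha$-crossed extension $0\to (M,\alpha_M)\xrightarrow{\chi}(N,\alpha_N)\xrightarrow{\mu}(P,\alpha_P)\xrightarrow{\pi}(L,\alpha_L)\to 0$ together with the Hom-linear sections $\sigma\colon L\to P$ of $\pi$ and $\rho\colon\Ima(\mu)\to N$ of $\mu_{\mid}$ that are part of the data. By exactness $\Ker(\pi)=\Ima(\mu)$, so the Hom-linear skew-bilinear map $g\colon L\wedge L\to P$, $g(x,y)=[\sigma(x),\sigma(y)]-\sigma([x,y])$, factors through $\Ima(\mu)$; put $f:=\rho\circ g\colon L\wedge L\to N$, so that $\mu\circ f=g$ and $f\circ\alpha_L^{\wedge 2}=\alpha_N\circ f$.

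Next, feeding $\sigma$ and $f$ into the degree-two coboundary formula, I would set, for $x,y,z\in L$,
\[
\Psi(x,y,z)=\sigma(\alpha_L^{2}x)\centerdot f(y,z)-\sigma(\alpha_L^{2}y)\centerdot f(x,z)+\sigma(\alpha_L^{2}z)\centerdot f(x,y)-f([x,y],\alpha_L z)+f([x,z],\alpha_L y)-f([y,z],\alpha_L x)\in N,
\]
where $\centerdot$ denotes the action of $(P,\alpha_P)$ on $(N,\alpha_N)$ coming from the crossed module. Expanding this and using axiom~(a) of an $\alpha$-crossed module, $\mu(\alpha_P(p)\centerdot n)=[p,\mu(n)]$, together with the Hom-Jacobi identities in $P$ and in $L$, one checks that $\mu(\Psi(x,y,z))=0$, i.e.\ $\Psi(x,y,z)\in\Ker(\mu)=\Ima(\chi)$. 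As $\chi$ is injective, $c:=\chi^{-1}\circ\Psi\colon L\wedge L\wedge L\to M$ is well defined, and $c\circ\alpha_L^{\wedge 3}=\alpha_M\circ c$ because $\sigma$, $\rho$, $\chi$ are Hom-linear and all the twisting maps are multiplicative; hence $c\in C^3_\alpha(L,M)$.

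The substantial step is then the cocycle identity $d^{3}c=0$. I would expand $d^{3}c(x_0\wedge\dots\wedge x_3)$, lift every term along $\chi$ into $N$ (legitimate since the acting elements lie in $\Ima(\alpha_P)$, which preserves $\Ker(\mu)$ by axiom~(a)), rewrite each bracket $[\sigma(x_i),\sigma(x_j)]$ produced in $N$ as $\sigma([x_i,x_j])+g(x_i,x_j)$, and then use axiom~(b), $\mu(\alpha(n))\centerdot n'=[n,n']$, to turn the resulting actions by elements of $\Ima(\mu)$ into brackets of $N$; the remaining terms cancel by the Hom-Jacobi identity in $N$ and by the centrality of $\Ker(\mu)$ in $N$ (Remarks~\ref{cm properties} and~\ref{alfa cm}). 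This gives $c\in Z^3_\alpha(L,M)$, and I define $\eta$ of the class of the extension to be $[c]\in H^3_\alpha(L,M)$.

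Finally I would show $[c]$ is canonical. Any two Hom-linear lifts $f$ of $g$ through $\mu$ differ by $\chi\circ h$ for a Hom-linear $h\colon L\wedge L\to M$, and, using the identification of the $L$-action on $M$ with the $P$-action on $\Ker(\mu)$ built into Definition~\ref{crossed extension}, $\Psi$ then changes by $\chi$ applied to $d^{2}h$; thus $[c]$ is independent of the lift $f$, in particular of $\rho$. Changing $\sigma$ to $\sigma'$, the difference $\sigma'-\sigma$ factors through $\Ima(\mu)=\Ker(\pi)$, is lifted along $\rho$, and the same bookkeeping again yields a coboundary. Lastly, given a homomorphism $(\varphi,\phi)$ of $\alpha$-crossed extensions as in Definition~\ref{def hom}, the choices $\phi\circ\sigma$ and $\varphi\circ f$ form an admissible section and lift for the target extension and satisfy $g'=\phi\circ g$, from which the two associated $3$-cocycles are seen to coincide; hence $\eta$ descends to a well-defined, canonical map on $\Ext^2_\alpha(L,M)$. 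The hard part throughout is the bookkeeping of the powers of $\alpha_L$: with the ordinary crossed module identities the Hom-Jacobi cancellations in the last two steps do not close up, and it is exactly the $\alpha$-twisted identities $\mu(\alpha_L(l)\centerdot m)=[l,\mu(m)]$ and $\mu(\alpha(m))\centerdot m'=[m,m']$, matched against the $\alpha_L^{n}$ appearing in $d^{n}$, that make everything fit together.
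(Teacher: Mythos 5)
Your construction is essentially the paper's own proof: your $\Psi$ coincides (after using skew-symmetry of $f$) with the $3$-cochain $h_{\xi}$ built in the paper from the sections $\sigma$ and $\rho$, the verification that it lands in $\Ker(\mu)=M$ and is a cocycle uses the same $\alpha$-crossed module axioms and Hom-Jacobi identity, and your well-definedness argument (change of lift, change of $\sigma$, invariance under morphisms via $\phi\circ\sigma$) follows the same coboundary bookkeeping as the paper. So the proposal is correct and takes the same route.
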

\begin{proof}
	We define $\eta\colon \Ext^2_{\alpha}(L,M)\to
	H_{\alpha}^3(L,M)$ as follows. Given an $\alpha$-crossed extension
	\[
\xymatrix{
(\xi): &	0 \ar[r] & (M,\alpha_M) \ar[r]^-{\chi} & (N,\alpha_N) \ar[r]^-{\mu} & (P,\alpha_P) \ar@<-.5ex>[r]_-{\pi} & (L,\alpha_L) \ar@<-.5ex>[l]_-{\sigma} \ar[r] & 0
}
\]
	with Hom-linear section $\rho \colon (\Ima (\mu), \alpha_{P \mid}) \to (N, \alpha_{N})$. For any $l_1,l_2 \in L$ we have that $[\sigma(l_1),\sigma(l_2)]-\sigma[l_1,l_2]\in \Ker(\pi)=\Ima(\mu)$. Taking
	\[
	f(l_1,l_2)=\rho ([\sigma(l_1),\sigma(l_2)]-\sigma[l_1,l_2])
	\] we define
	\begin{equation}\label{eq7}
		\begin{split}
			h_{\xi}(l_1,l_2,l_3) = &f(\alpha_L(l_1),[l_2,l_3])+f(\alpha_L(l_2),[l_3,l_1]) \\
			{} &+ f(\alpha(l_3),[l_1,l_2])+ \sigma(\alpha^2_L(l_1)) \centerdot f(l_2,l_3) \\
			{} &+\sigma(\alpha^2_L(l_2)) \centerdot f(l_3,l_1)+ \sigma(\alpha^2_L(l_3)) \centerdot f(l_1,l_2).
		\end{split}
	\end{equation}
	Using $\alpha$-crossed modules properties, we can directly check that $\mu h_{\xi}(l_1,l_2,l_3)=0$ and therefore $h_{\xi}(l_1,l_2,l_3)\in M =\Ker(\mu)$.
	
	Thus we have defined a linear map $h_{\xi}\colon L^{\wedge 3}\to M$ such that $h \circ \alpha_L^{\wedge 3} = \alpha_M \circ h$, that is to say, $h_{\xi} \in C^3_{\alpha}(L,M)$. Routine calculations show that $d^3 (h_{\xi}) = 0$. Here it is only necessary to apply the Hom-Jacobi identity.
	Thus $h_{\xi} \in Z_{\alpha}^3(L,M)$ and we define $\eta (\xi) = h_{\xi} + B^3_{\alpha}(L,M)$.
	
	Now we are going to check that $\eta$ is a well-defined map, i.e.\ the class of $h_{\xi}$ does not depend on the Hom-sections, $\sigma$ and $\rho$, and if there is a map of $\alpha$-crossed extensions $(\varphi,\phi) \colon (\xi)\to (\xi')$, then $h_{\xi}=h_{\xi'}$ in $H_{\alpha}^3(L,M)$.
	
	Let $\overline{\sigma}\colon L\to P$ be another linear Hom-section of $\pi$ and $\overline{h}_{\xi}$ be the $3$-cocycle defined
	using $\overline{\sigma}$ instead of $\sigma$. Since $\sigma$ and $\overline{\sigma}$ are both Hom-sections of $\pi$ there exists a linear map $g\colon L \to
	N$ such that $\mu \circ g = \overline{\sigma}- \sigma$. Then, by the definition of $h_{\xi}$, $\overline{h}_{\xi}$ and $\alpha$-crossed modules properties, we easily get
	\begin{align}\label{diff h}
		(\overline{h}_{\xi}-h_{\xi})(l_1,&l_2,l_3)=  (\overline{f}-f)(\alpha_L(l_1),[l_2,l_3])+ (\overline{f}-f)(\alpha_L(l_2),[l_3,l_1]) \nonumber \\ 
		&+ (\overline{f}-f)(\alpha_L(l_3),[l_1,l_2]) + \sigma(\alpha_L^2(l_1)) \centerdot (\overline{f}-f)(l_2,l_3)\nonumber \\
		&+ \sigma(\alpha_L^2(l_2)) \centerdot (\overline{f}-f)(l_3,l_1) + \sigma(\alpha_L^2(l_3)) \centerdot (\overline{f}-f)(l_1,l_2)\nonumber \\ 
		&+ [g(\alpha_L(l_1)),\rho([\overline{\sigma}(l_2),\overline{\sigma}(l_3)]-\overline{\sigma}[l_2,l_3])]\nonumber \\  
		&+ [g(\alpha_L(l_2)),\rho([\overline{\sigma}(l_3),\overline{\sigma}(l_1)]-\overline{\sigma}[l_3,l_1])] \\
		&+ [g(\alpha_L(l_3)),\rho([\overline{\sigma}(l_1),\overline{\sigma}(l_2)]-\overline{\sigma}[l_1,l_2])]\nonumber ,
	\end{align}
	where $\overline{f}(l_1,l_2)=\rho([\overline{\sigma}(l_1),\overline{\sigma}(l_2)]-\overline{\sigma}[l_1,l_2])$.
	
	Next we define $\widetilde{f}\colon L{\wedge} L\to N$ by
	\[
	\widetilde{f}(l_1,l_2)= - [g(l_1),g(l_2)]-g[l_1,l_2]+\overline{\sigma}(\alpha_L(l_1)) \centerdot g(l_2)- \overline{\sigma}(\alpha_L(l_2)) \centerdot g(l_1).
	\]
	Since $\mu \circ \widetilde{f}=\mu \circ (\overline{f}-f)$, then
	$\overline{f}-f-\widetilde{f}$ is a map from $(L \wedge L, \alpha_{L}^{\wedge 2})$ to $(M, \alpha_M)$. Moreover, if we replace $\overline{f}-f$ by
	$\widetilde{f}$ in~\eqref{diff h}, then the equality still remains
	true in $H_{\alpha}^3(L,M)$ since the difference is
	the coboundary $d^2(\overline{f}-f-\widetilde{f})$. After
	replacing $\overline{f}-f$ by $\widetilde{f}$ in~\eqref{diff h}, using
	the Hom-Jacobi identity and $\alpha$-crossed modules properties,
	we directly obtain that $(\overline{h}_{\xi}-h_{\xi})(l_1,l_2,l_3)=0$ in
	$H^3_{\alpha}(L,M)$. Therefore the class of
	$h_{\xi}$ does not depend on the section $\sigma$.
	
	Let $(\varphi,\phi)\colon (\xi) \to (\xi')$ be a morphism of crossed $\alpha$-extensions with the notation of Definition~\ref{def hom}. Let
	$\sigma\colon (L,\alpha_L) \to (P,\alpha_P)$ and $\rho\colon (\Ima (\mu),\alpha_{P \mid})\to (N, \alpha_N)$ be linear Hom-sections of $\pi$ and $\mu$, respectively, and let $\sigma'\colon (L, \alpha_L) \to (P',\alpha_{P'})$ and $\rho'\colon (\Ima(\mu'),\alpha_{P' \mid})\to (P',\alpha_{P'})$
	be linear Hom-sections of $\pi'$ and $\mu'$, respectively. Hence, in~\eqref{eq7} we can use $\sigma$ and $\rho$ to define $h_{\xi}$, and $\sigma'$ and $\rho'$ to define $h_{\xi'}$. But since $\pi'\circ \phi \circ \sigma=id_{L}$,
	$\phi \circ \sigma$ is another linear Hom-section of $\pi'$ and therefore we can
	replace $\sigma'$ by $\phi \circ \sigma$ to define $h_{\xi'}$. Then we easily
	derive the following equality in $H^3_{\alpha}(L,M)$
	\[
	(h_{\xi}-h_{\xi'})(l_1,l_2,l_3)=d^2(\widehat{f})(l_1,l_2,l_3)
	\]
	for the linear map $\widehat{f}\colon L^{\otimes 2}\to M$ given by
	\[
	\widehat{f}(l_1,l_2)=(\varphi \circ \rho-\rho'\circ \phi)([\sigma(l_1),\sigma(l_2)]-\sigma[l_1,l_2]).
	\]
	This proves that $h_{\xi}=h_{\xi'}$ in
	$H^3_{\alpha}(L,M)$ and also the class of
	$h_{\xi}$ does not depend on the section $\rho$. Therefore
	the map $\eta$ is well defined.
\end{proof}

\begin{remark}
	The usage of $\alpha$-crossed modules instead of standard crossed modules is crucial to obtain that $h_{\xi}(l_1, l_2, l_3)$ belongs to the kernel of $\mu$ for any $l_1, l_2, l_3 \in L$. 
\end{remark}

\begin{remark}
	As in the classical case, we would like to stablish an isomorphism between $\Ext^2_{\alpha}(L,M)$ and $H^3_{\alpha}(L,M)$. Nevertheless, when we study this situation we found two fundamental obstacles. Firstly, it is not easy to construct a canonical example of $\alpha$-crossed extension for a given cohomology class. The second one is that, although we know that the second cohomology classes of a free Hom-Lie algebra are trivial, we do not know if this is still true in higher order. The veracity of this result would be very helpful to define an inverse to $\eta$ in Theorem~\ref{main th}.
	In any case, these questions will be further investigated in following articles. 
\end{remark}

\section*{Acknowledgements}
The authors would like to thank Tim Van der Linden for his helpful suggestions.


\providecommand{\bysame}{\leavevmode\hbox to3em{\hrulefill}\thinspace}
\providecommand{\MR}{\relax\ifhmode\unskip\space\fi MR }
\providecommand{\MRhref}[2]{%
	\href{http://www.ams.org/mathscinet-getitem?mr=#1}{#2}
}
\providecommand{\href}[2]{#2}

\end{document}